\def\v        {{\boldsymbol v}}
\def\z        {{\boldsymbol z}}
\def\0        {{\boldsymbol 0}}
\def\h        {{\boldsymbol h}}
\def\u        {{\boldsymbol u}}
\def\v        {{\boldsymbol v}}
\def\w        {{\boldsymbol w}}
\def\f        {{\boldsymbol f}}
\def\b        {{\boldsymbol b}}
\def\H        {{\boldsymbol H}}
\def\F        {{\boldsymbol F}}
\def\G        {{\boldsymbol G}}
\def\V        {{\boldsymbol V}}
\def\L        {{\boldsymbol L}}
\def\B      {{\boldsymbol B}}
\def\W        {\boldsymbol W}
\newtheorem{theorem}{Theorem}%[section]
\newtheorem{lemma}[theorem]{Lemma}
\newtheorem{proposition}[theorem]{Proposition}
\newenvironment{proof}{ \textbf{Proof}:}{\hfill}
\begin{document}

\title{Periodic solution and asymptotic stability for the magnetohydrodynamic equations with inhomogeneous boundary condition}

\author{I. Kondrashuk
\thanks{Grupo de
Matem\'aticas Aplicadas, Dpto. de Ciencias B\'asicas, Facultad de
Ciencias, Universidad del B\'{\i}o-B\'{\i}o, Campus Fernando May,
Casilla 447, Chill\'an, Chile. E-mail: {\tt igor.kondrashuk@gmail.com} I. K. was supported by Fondecyt (Chile) Grants Nos. 1040368,
1050512 and 1121030, by DIUBB (Chile) Grants Nos. 102609 and 121909 GI/C-UBB.}, 
E.A. Notte-Cuello 
\thanks{
Dpto. de Matem\'{a}ticas. Universidad de La Serena, La Serena, Chile.
E-mail: \texttt{enotte@userena.cl} E. A. N-C. was supported in part by
Direcci\'{o}n de Investigaci\'{o}n de la Universidad de La Serena (DIULS)
through Grant No. PR12152.},
M. Poblete-Cantellano
\thanks{
Departamento de Matem\'{a}ticas, Universidad de Atacama, Avenida Copayapu
485, Casilla 240, Copiap\'{o}, Chile. E-mail: \texttt{mpoblete@mat.uda.cl}
M. P-C. was partially supported by Universidad de Atacama, project 221169
DIUDA 8/31.}
and 
M. A. Rojas-Medar
\thanks{
Instituto de Alta Investigaci\'{o}n, Universidad de Tarapac\'{a}, Casilla 7D, Arica, Chile. 
E-mail: \texttt{marko.medar@gmail.com} M.A. R-M was
partially supported by Project No. MTM2015-69875-P, by Ministerio de Ciencia
e Innovaci\'{o}n, Espa\~{n}a and Fondecyt (Chile) Grant No. 1120260.}}

\maketitle

\date{}

\begin{abstract}
We show, using the spectral Galerkin method together with compactness
arguments, existence and uniqueness of the periodic strong solutions for the
magnetohydrodynamics's type equations with inhomogeneous boundary
conditions. Also, we study the asymptotic stability  for time periodic
solution for this system. In particular, when the magnetic field $\h(x,t)$\
is zero, we obtain existence, uniqueness and asymptotic behavior of the strong
solutions to the Navier-Stokes equations with inhomogeneous boundary
conditions.
\end{abstract}

\noindent\thanks{2010 Mathematics Subject Classification: 35Q30, 35B10, 76W05} \\
\thanks{Keywords: Magnetohydrodynamic equations; periodic solutions.}

\section{Introduction}

From many decades is consolidated the awareness that the motion of incompressible electrical conducting
fluid can be modeled by the magnetohydrodynamic(MHD) equations, which
correspond to the Navier-Stokes (NS) equations coupled to the Maxwell
equations. This system of equations  plays an important role in various
applications, for example in phenomenons related to the plasma behavior \cite%
{alfven}, heat conductivity and nematic liquid crystal flows  \cite{Gala-1}-\cite{Gala-4}, 
stochastic dynamics \cite{Vasilev}. In the case when the MHD
equations have periodic boundary conditions these equations play an important
role in MHD generators \cite{Mitchner}. Also, these boundary conditions can
be considered in the tasks related with processes of the cooling nuclear reactors.

In presence of a free motion of heavy ions (see Schl\"uter $\cite{Schluter}, \cite{Schluter1} $ and Pikelner $\cite{Pikelner}),$ \ 
the MHD equation may be reduced to 
\begin{equation}
\begin{array}{lll}
\displaystyle\frac{\partial \u}{\partial t}-\displaystyle\frac{\eta }{{\rho }}\Delta 
\u+\u\cdot \nabla \u-\frac{\mu }{\rho }\h\cdot \nabla \h%
 & = & \displaystyle\f-\frac{1}{\rho }\nabla \left( p^{\ast }+\frac{\mu }{2}%
\h^{2}\right) \bigskip \\ 
\displaystyle\frac{\partial \h}{\partial t}-\frac{1}{{\bar{\mu} \sigma }}\Delta 
\h+\u\cdot \nabla \h-\h\cdot \nabla \u & = & -{\rm grad}%
w\bigskip \\ 
{\rm div}\,\u={\rm div}\,\h=0 &  & 
\end{array}
\label{Ch1}
\end{equation}%
with 
\begin{equation}
\u\left\vert _{\partial \Omega }\right. = \mbox{\boldmath $\beta$} _{1}(x,t)
,\qquad \h\left\vert _{\partial \Omega }\right. = \mbox{\boldmath $\beta$} _{2}(x,t).  \label{LS0}
\end{equation}

Here, $\u$ and $\h$ are  unknown velocity and magnetic field, respectively; 
$p^{\ast }$ is an unknown hydrostatic pressure; $w$ is
an unknown function related to the heavy ions (in such way that the density
of electric current, $j_{0},$ generated by this motion satisfies the
relation ${\rm rot}j_{0}=-\sigma \nabla w);$ $\rho$ is the density of mass of
the fluid (assumed to be a positive constant); $\bar{\mu} >0$ is a constant
magnetic permeability of the medium; $\sigma >0$ is a constant electric
conductivity; $\eta >0$ is a constant viscosity of the fluid; $\f$
is a given external force field. In this paragraph we used notations of \cite{NRR}. 
We should note the given external force field $\f$ is periodic throughout the paper.

As it has been mentioned in Ref. \cite{NRR}, several authors studied the initial value problem 
associated to the system (\ref{Ch1}).  By using the semigroup
results of Kato and Fujita \cite{Fujita}, Lassner  proved the existence and
uniqueness of strong solutions in Ref. \cite{Lassner}.  Then, Boldrini and Rojas-Medar \cite{Boldrini}, 
\cite{Rojas2} improved this result to global strong solutions by using the
spectral Galerkin method.  The regularity of weak solutions has been studied by 
Dam\'{a}zio and Rojas-Medar in \cite{Damasio}.  After this, Notte-Cuello and Rojas-Medar \cite{Notte}
used an iterative approach to show the existence and uniqueness of the strong
solutions. Later,  in works by Rojas-Medar and Beltr\'{a}n-Barrios \cite{Rojas1} and by Berselli and 
Ferreira \cite{Berselli} the initial value problem in  time dependent domains was considered. 

The periodic problem for the classical Navier-Stokes equations was studied
by Serrin \cite{Serrin} using the perturbation method and subsequently by
Kato \cite{Kato} using the spectral Galerkin method. Following the
methodology used by Kato, Notte-Cuello and Rojas-Medar \cite{NRR} studied
the existence and uniqueness of periodic strong solutions with homogeneous
boundary conditions for the MHD type equations. In this work it is considered
the periodic problem for the MHD equations with inhomogeneous boundary
conditions. We prove the existence and the uniqueness of the strong solutions to
this system of equations, following the methodology used by Morimoto \cite%
{Morimoto}, who presented results of existence and uniqueness of weak
solutions to the Navier-Stokes equations and to the Boussinesq equations.

On the other hand, Hsia et al \cite{Hsia} have shown that with the smallness assumption of the time periodic force, there exists 
only one time periodic solution to Navier-Sokes equations and this time periodic solution is globally asymptotically stable in the $\H^{1}$ sense. 
We follow the method used in \cite{Hsia} to perform a study of the asymptotic stability for our system.

\section{Preliminaries}

We begin by recalling definitions and facts  from Ref. \cite{NRR} to be used later in this paper. Let $\Omega$ be some bounded domain  in $\mathbb{R}^2$ or $\mathbb{R}^3$. 

The $L^{2}(\Omega )$-product and norm are denoted by $(,)~~\mbox{and}~~|~~|$%
, respectively; the $L^{p}(\Omega )$-norm by $|~~|_{L^{p}},1~\leq p\leq
\infty $; the $H^{m}(\Omega )$- norm is denoted by $\Vert ~~\Vert _{H^{m}}$
and the $W^{k,p}(\Omega )$-norm by $|~~|_{W^{k,p}}$.

Here $H^{m}(\Omega )=W^{m,2}(\Omega )$ and $W^{k,p}(\Omega )$ are  usual
Sobolev spaces, $H_{0}^{1}(\Omega )$ is the closure of $C_{0}^{\infty }(\Omega
)~\mbox{ in the}~H^{1}-\mbox{norm}$.

If B is a Banach space, we denote $L^{q}(0,T;B)$ the Banach space of the
B-valued functions defined in the interval (0, T) that are $L^{q}$%
-integrable in the sense of Bochner.

Let $C_{0,\sigma }^{\infty }(\Omega )=\{\v\in (C_{0}^{\infty
}(\Omega ))^{n};~\mbox{div}~\v=0\}$, $\H=\mbox{closure of}~~C_{0,\sigma }^{\infty }(\Omega )~\mbox{in}%
~\L^2(\Omega ))$, $\V=\mbox{closure
of}~~C_{0,\sigma }^{\infty }(\Omega )~\mbox{in}~\H_{0}^{1}(\Omega )$, 
$\H^1_{\sigma}(\Omega) = \{ \u \in \H^1(\Omega) : ~\mbox{div}~\u=0\}$.

Let $P$ be the orthogonal projection from $\L^{2}(\Omega )$~ onto $\H$
obtained by the usual Helmholtz decomposition. Then, the operator $%
A:\H\rightarrow \H$ given by $A=-P\Delta $ with domain $D(A)=\H^{2}(\Omega
)\cap ~\V$ is called the Stokes operator. 

In order to obtain regularity properties of the Stokes operator we will
assume that $\Omega $ is of class $C^{1,1}$ \cite{Amrouche}. This assumption
implies, in particular, that when $A\u\in \L^{2}(\Omega )$, then $%
\u\in \H^{2}(\Omega )~\mbox{and}~\Vert \u\Vert _{\H^{2}}~%
\mbox{and}~|A\u|$ are equivalent norms. 

The eigenfunctions and
eigenvalues of Stokes operator defined on $\V \cap \H^2(\Omega)$
are denoted by $\w^k$ and $\lambda_k$ respectively. It is well known that $\{ \w_k(x)\}^\infty_{k=1}$ form an
orthogonal complete system in the spaces $\H$, $\V$ and $\V \cap
\H^2(\Omega)$ equipped with the usual inner products
$(\u,\v), (\nabla \u,\nabla \v)$
and $(P\Delta \u,P\Delta \v)$ respectively.

Now, let us introduce some functions spaces consisting of $\tau $-periodic
functions. For $k\geq 0$, $k\in \mathbb{N}$, we denote by 
\[
C^{k}(\tau ;B)=\{f: \mathbb{R}\rightarrow B\mbox{ / }f\mbox{ is }\tau 
\mbox{-
periodic and }D_{t}^{i}f\in C(\mathbb{R};B)\mbox{  for any
}i\leq k\}. 
\]
Then, let us define the norm 
\[
\Vert f\Vert _{C^{k}(\tau ;B)}=\sup_{0\leq t\leq \tau }\sum_{i=1}^{k}\Vert
D_{t}^{i}f(t)\Vert _{B}. 
\]

We denote for $1\leq p\leq \infty $, the spaces 
\[
L^{p}(\tau ;B)=\{f: \mathbb{R}\rightarrow B\mbox{ / }f\mbox{ is measurable,}\,\tau %
\mbox{- periodic and }\Vert f\Vert _{L^{p}(\tau ;B)}<\infty \}, 
\]%
where 
\[
\Vert f\Vert _{L^{p}(\tau ;B)}=\left( \int_{0}^{\tau }\Vert f(t)\Vert
_{B}^{p}\right) ^{\frac{1}{p}}\mbox{  for  }1\leq p<\infty 
\]%
and 
\[
\Vert f\Vert _{L^{\infty }(\tau ;B)}=\sup_{0\leq t\leq \tau }\Vert f(t)\Vert
_{B}. 
\]%
Similarly, we denote by 
\[
W^{k,p}(\tau ;B)=\{f\in L^{p}(\tau ;B)\mbox{ / }D_{t}^{i}f\in L^{p}(\tau ;B)%
\mbox{  for any }i\leq k\}. 
\]%
In particular, $H^{k}(\tau ;B)=W^{k,2}(\tau ;B),$ when $B$ is a Hilbert
space.

The problem we consider is as follows: Let the given external force $\f$ be periodic in $t$ with some periodic $\tau .$ Then we try to prove the existence and uniqueness of periodic strong solutions $(\u,\h) $ of the magnetohydrodynamic equations (\ref{Ch1})-(\ref
{LS0}) with some periodic $\tau :$%
\begin{equation}
\u(x,t+\tau) =\u(x,t) ;\qquad 
\h(x,t+\tau) =\h(x,t) .  \label{Ch2}
\end{equation}

Now, according to the Gauss theorem, the boundary value $\mbox{\boldmath $\beta$} _{i}$ $i=1,2,$
should satisfy the so-called general outflow condition (GOC)%
\[
\left( GOC\right) \qquad \int_{_{\partial \Omega }}\mbox{\boldmath $\beta$} _{i}\cdot
nd\sigma =\sum\limits_{k=0}^{N}\int_{_{\Gamma _{k}}}\mbox{\boldmath $\beta$} _{i}\cdot
nd\sigma =0. 
\]%
If $N>1,$ the stringent outflow condition (SOC),

\[
\left( SOC\right) \qquad \int_{_{\Gamma _{k}}}\mbox{\boldmath $\beta$} _{i}\cdot nd\sigma
=0,\qquad ( k=0,1,...,N); 
\]%
is stronger than GOC.

In this work the following assumptions and results are considered,

\begin{description}
\item[$A_0$] $\Omega \subseteq \mathbb{R}^{n}, \,\,\, n=2,3$ bounded domain and $\partial \Omega $ 
consists of smooth $N+1$ connected components $\Gamma _{0},\Gamma _{1},...,\Gamma
_{N} $ and $\Omega $ being inside of $\Gamma _{0}$ $(N\geq 1),$   see ref. \cite{Morimoto}, p.1). 
This means $\Omega $ is enclosed by $%
\Gamma _{0},\Gamma _{1},...,\Gamma _{N},$ consequently. Such a structure of the boundary may be applied for the modeling of 
fluid movement inside of pipes.  The fluid velocity field is tangent to $\Gamma _{0}$ at the piece $\Gamma _{0}$ of the boundary.

\item[$A_1$] $\mbox{\boldmath $\beta$} _{i}\left( x,t\right) \in C^{1} ( \tau %
 ,\H^{1/2}\left(  \Omega \right) ) $ and satisfies $%
( SOC) ,$ $i=1,2.$ 

\end{description}

\begin{lemma} \label{lema1}
$\left[ \cite{Morimoto},p. 636\right] $ Suppose $\mbox{\boldmath $\beta$} \in
C^{1} ( \tau%
 ,\H^{1/2}\left(  \Omega \right) ) $ and satisfies (SOC). Then for every $\varepsilon >0,$ there
exists a solenoidal time-periodic function $\v\in C^{1}(\tau %
,\H_{\sigma }^{1}(\Omega)) $ such that%
\[
\begin{array}{cc}
\v(x,t) & =\mbox{\boldmath $\beta$} (x,t) ,\quad a.e.\,\, x\in
\partial \Omega ,\,\,\forall t\in \mathbb{R}, \\ 
&  \\ 
\left\vert ((\u\cdot \nabla) \v,\u)
\right\vert & \leq \varepsilon \left\vert \nabla \u\right\vert
^{2},\quad \forall \u\in \V,\forall t\in \mathbb{R}%
\end{array}%
\]
Moreover, if $\mbox{\boldmath $\beta$} \in
C^{1} ( \tau ,\W^{1,3/2}(\Omega) ) $ then $\v\in C^{1}(\tau ,\W^{2,2}(\Omega)) $.
\end{lemma}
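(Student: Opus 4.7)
The plan is to adapt the classical Hopf--Leray boundary extension to the time-periodic setting. The target is a divergence-free $\tau$-periodic extension of $\boldsymbol{\beta}$ concentrated in a thin boundary tube, so that the convective term is controlled through Hardy's inequality and a measure-smallness argument.

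\textbf{Step 1 (linear solenoidal extension).} Since $\boldsymbol{\beta}(\cdot,t)$ satisfies (S.O.C.)\ for every $t$, there is a standard linear, continuous extension operator $E:H^{1/2}(\partial\Omega)\to \boldsymbol{H}^{1}_{\sigma}(\Omega)$, built component-by-component on $\Gamma_{0},\ldots,\Gamma_{N}$ via a vector potential (or, equivalently, by solving an auxiliary Stokes problem), such that $E(\boldsymbol{\beta}(\cdot,t))$ restricts to $\boldsymbol{\beta}(\cdot,t)$ on $\partial\Omega$ and is divergence-free in $\Omega$. The linearity and continuity of $E$ together with $\boldsymbol{\beta}\in C^{1}(\mathbb{R},H^{1/2}(\partial\Omega))$ and $\tau$-periodic give immediately a $\tau$-periodic extension $\boldsymbol{b}\in C^{1}(\mathbb{R},\boldsymbol{H}^{1}_{\sigma}(\Omega))$. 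Passing to a vector potential $\boldsymbol{a}$ with $\curl\,\boldsymbol{a}=\boldsymbol{b}$ on a slightly enlarged Lipschitz neighborhood (arranged so that $\boldsymbol{a}$ keeps the $C^{1}_{\pi}$ dependence on $t$) sets up the cutoff step.

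\textbf{Step 2 (Hopf cutoff near $\partial\Omega$).} Let $d(x)=\mathrm{dist}(x,\partial\Omega)$. Fix a smooth scalar $\theta:\mathbb{R}_{+}\to[0,1]$ with $\theta\equiv 1$ near $0$, $\theta\equiv 0$ on $[1,\infty)$, and for small $\delta>0$ set $\theta_{\delta}(x)=\theta\bigl(\tfrac{-\log d(x)}{\log(1/\delta)}\bigr)$, the classical Hopf weight. Then $\theta_{\delta}=1$ on $\partial\Omega$, $\mathrm{supp}\,\theta_{\delta}\subset\Omega_{\delta}:=\{d(x)<\delta\}$, and the crucial bound
\[
|\nabla\theta_{\delta}(x)|\ \le\ \frac{C}{d(x)\,\log(1/\delta)}
\]
holds. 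Define $\boldsymbol{v}(x,t)=\curl\bigl(\theta_{\delta}(x)\,\boldsymbol{a}(x,t)\bigr)$. Since $\boldsymbol{v}$ is a curl it is divergence-free; since $\theta_{\delta}\equiv 1$ on $\partial\Omega$ one checks $\boldsymbol{v}|_{\partial\Omega}=\curl\boldsymbol{a}|_{\partial\Omega}=\boldsymbol{\beta}$. Time regularity and periodicity are inherited from $\boldsymbol{a}$.

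\textbf{Step 3 (the key estimate).} For $\boldsymbol{u}\in\boldsymbol{V}$, expand
\[
\bigl((\boldsymbol{u}\cdot\nabla)\boldsymbol{v},\boldsymbol{u}\bigr)
=\int_{\Omega_{\delta}} u_{i}\,\partial_{i}v_{j}\,u_{j}\,dx,
\qquad
\partial_{i}v_{j}=\partial_{i}\theta_{\delta}\cdot(\text{terms in }\boldsymbol{a})+\theta_{\delta}\cdot(\text{terms in }\nabla\boldsymbol{a}).
\]
The harmless piece with $\theta_{\delta}\nabla\boldsymbol{a}$ is bounded by $C\|\boldsymbol{a}\|_{W^{1,\infty}}|\boldsymbol{u}|_{L^{2}(\Omega_{\delta})}^{2}$, and by Poincar\'e $|\boldsymbol{u}|_{L^{2}(\Omega_{\delta})}^{2}\le C\delta^{2}|\nabla\boldsymbol{u}|^{2}$. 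For the singular piece, I combine the Hopf weight with Hardy's inequality (valid since $\boldsymbol{u}\in\boldsymbol{V}$ vanishes on $\partial\Omega$)
\[
\int_{\Omega}\frac{|\boldsymbol{u}|^{2}}{d(x)^{2}}\,dx\ \le\ C\,|\nabla\boldsymbol{u}|^{2},
\]
and obtain
\[
\Bigl|\int_{\Omega_{\delta}}u_{i}\,(\partial_{i}\theta_{\delta})\,a_{j}\,u_{j}\,dx\Bigr|
\ \le\ \frac{C\,\|\boldsymbol{a}\|_{L^{\infty}(\mathbb{R};L^{\infty}(\Omega))}}{\log(1/\delta)}\,|\nabla\boldsymbol{u}|^{2}.
\]
Choosing $\delta=\delta(\varepsilon)$ small enough, both contributions sum to at most $\varepsilon|\nabla\boldsymbol{u}|^{2}$, uniformly in $t$ by the $\tau$-periodic $C^{1}$ bound on $\boldsymbol{a}$.

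\textbf{Main obstacle.} The delicate point is balancing the Hopf trick against the time regularity: one must choose the extension $E$, the vector potential $\boldsymbol{a}$, and the cutoff $\theta_{\delta}$ so that the resulting map $t\mapsto\boldsymbol{v}(\cdot,t)$ genuinely lies in $C^{1}(\mathbb{R},\boldsymbol{H}^{1}_{\sigma}(\Omega))$ and remains $\tau$-periodic. Linearity of every ingredient (extension, curl-inversion, multiplication by the $t$-independent $\theta_{\delta}$) is what makes this work, and this is the subtle bookkeeping in the proof; the spatial estimate itself is the standard Hopf computation.
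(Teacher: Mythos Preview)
The paper does not give its own proof of this lemma; it is quoted verbatim from Morimoto \cite{Morimoto}, p.~636, and used as a black box. So there is nothing in the paper to compare your argument against, and your sketch is in fact the classical Hopf--Leray construction that underlies Morimoto's result.

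Your outline is essentially correct, but Step~3 contains a genuine slip. You write
\[
\partial_{i}v_{j}=\partial_{i}\theta_{\delta}\cdot(\text{terms in }\boldsymbol{a})+\theta_{\delta}\cdot(\text{terms in }\nabla\boldsymbol{a}),
\]
but since $\boldsymbol{v}=\curl(\theta_{\delta}\boldsymbol{a})$ already contains one derivative of $\theta_{\delta}$, the gradient $\partial_{i}v_{j}$ produces a term with $\partial_{i}\partial_{k}\theta_{\delta}$ (second derivatives of the cutoff) and a term with $\partial_{i}\partial_{k}a_{l}$ (second derivatives of the potential), neither of which appears in your decomposition. The second-derivative cutoff term can still be handled, since $|\nabla^{2}\theta_{\delta}|\le C/(d(x)^{2}\log(1/\delta))$ and Hardy's inequality then applies directly; but the $\nabla^{2}\boldsymbol{a}$ term and your use of $\|\boldsymbol{a}\|_{W^{1,\infty}}$ demand more regularity of $\boldsymbol{a}$ than the $H^{1/2}$ boundary data immediately provides.

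The clean fix is to integrate by parts first: since $\boldsymbol{u}\in\boldsymbol{V}$ is divergence-free and vanishes on $\partial\Omega$,
\[
\bigl((\boldsymbol{u}\cdot\nabla)\boldsymbol{v},\boldsymbol{u}\bigr)=-\bigl((\boldsymbol{u}\cdot\nabla)\boldsymbol{u},\boldsymbol{v}\bigr),
\]
so only $\boldsymbol{v}$ itself (not $\nabla\boldsymbol{v}$) enters the estimate. Writing $\boldsymbol{v}=\theta_{\delta}\,\curl\boldsymbol{a}+\nabla\theta_{\delta}\times\boldsymbol{a}$ and using $|\nabla\theta_{\delta}|\le C/(d\log(1/\delta))$ together with Hardy and Poincar\'e in $\Omega_{\delta}$ gives the bound $\varepsilon|\nabla\boldsymbol{u}|^{2}$ exactly as you intend, with no second derivatives anywhere. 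Your remarks on time regularity and periodicity (linearity of $E$, of the curl inversion, and $t$-independence of $\theta_{\delta}$) are the right ones.
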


\begin{proposition}
(Giga and Miyakawa \cite{Giga}). If $0\leq \delta <\frac{1}{2}+\frac{n}{4}$,
the following estimate is valid with a constant $C_{1}=C_{1}(\delta ,\theta
,\rho )$, 
\begin{equation}
|A^{-\delta }P\u\cdot \nabla \v|\leq C_{1}|A^{\theta }%
\u||A^{\rho }\v|\mbox{  for any  }\u\in D(A^{\theta
})\mbox{  and  }\v\in D(A^{\rho }),  \label{b1}
\end{equation}%
with $\theta ,\rho >0$  such that $\delta +\theta +\rho \geq \frac{n}{4}+\frac{1}{2}, \mbox{ }\rho +\delta >\frac{1}{2}.$
\end{proposition}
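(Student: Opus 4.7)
The plan is to establish the inequality by duality with respect to the $\H$-inner product, followed by H\"older's inequality and the standard Sobolev-type embedding $D(A^{s}) \hookrightarrow L^{q}(\Omega)^{n}$ for fractional powers of the Stokes operator. Concretely, since $A^{-\delta}P(\u\cdot\nabla\v) \in \H$, I would begin by writing
\[
|A^{-\delta}P(\u\cdot\nabla\v)| = \sup_{\phi \in \H,\ |\phi|=1} \bigl(A^{-\delta}P(\u\cdot\nabla\v),\,\phi\bigr),
\]
and use the self-adjointness of $A^{-\delta}$ on $\H$ together with $PA^{-\delta}=A^{-\delta}P$ (which reduces to $A^{-\delta}$ when applied to elements of $\H$) to move the operators onto $\phi$:
\[
\bigl(A^{-\delta}P(\u\cdot\nabla\v),\,\phi\bigr) = \bigl(\u\cdot\nabla\v,\,A^{-\delta}\phi\bigr).
\]

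Next, I would apply H\"older's inequality with three exponents $p_{1},p_{2},p_{3}$ satisfying $1/p_{1}+1/p_{2}+1/p_{3}=1$:
\[
\bigl|(\u\cdot\nabla\v,\,A^{-\delta}\phi)\bigr| \leq |\u|_{L^{p_{1}}}\,|\nabla\v|_{L^{p_{2}}}\,|A^{-\delta}\phi|_{L^{p_{3}}}.
\]
Each of the three factors is then controlled by a fractional-power norm of $A$ via the embedding $D(A^{s})\hookrightarrow L^{q}$ with the scaling relation $1/q=1/2-2s/n$, valid for $0<s<n/4$. Specifically, I would use $|\u|_{L^{p_{1}}}\lesssim |A^{\theta}\u|$ with $1/p_{1}=1/2-2\theta/n$, the analogous bound $|\nabla\v|_{L^{p_{2}}}\lesssim |A^{\rho}\v|$ coming from $\v\in D(A^{\rho})$ and $1/p_{2}=1/2-(2\rho-1)/n$, and finally $|A^{-\delta}\phi|_{L^{p_{3}}}\lesssim |\phi|$ with $1/p_{3}=1/2-2\delta/n$. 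Adding the three scaling identities and imposing $1/p_{1}+1/p_{2}+1/p_{3}=1$ produces exactly the hypothesis $\theta+\rho+\delta\geq n/4+1/2$.

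The main obstacle is the bookkeeping of the endpoint conditions so that all three embeddings are simultaneously admissible. The assumption $\theta,\rho>0$ is needed to obtain any gain in integrability over $L^{2}$; the restriction $\delta<1/2+n/4$ prevents $1/p_{3}$ from becoming negative, i.e.\ guarantees that $A^{-\delta}\phi$ lies in a genuine $L^{p_{3}}$ with $p_{3}<\infty$; and $\rho+\delta>1/2$ is exactly what is required for the $\nabla\v$ factor to be handled away from the critical endpoint (so that the embedding $D(A^{\rho})\hookrightarrow W^{1,p_{2}}$ is strict). A secondary technical point is the boundedness of $P$ on the relevant $L^{p}$-spaces, which in the present duality formulation is bypassed entirely because $P$ never needs to act on a general $L^{p}$-function: it sits between elements of $\H$ where $P=I$, and can be commuted past $A^{-\delta}$. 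Once these conditions are lined up, the estimate (\ref{b1}) follows by combining the three bounds inside the supremum over $|\phi|=1$.
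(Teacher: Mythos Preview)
The paper does not supply its own proof of this proposition; it is simply quoted from Giga and Miyakawa \cite{Giga} as a known estimate. Your duality--H\"older--embedding strategy is precisely the standard argument from that reference, and the scaling bookkeeping you carry out is correct.

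There is, however, one genuine gap in the range $0<\rho<1/2$ (which the hypotheses permit). Your control of the middle factor rests on the embedding $D(A^{\rho})\hookrightarrow W^{1,p_{2}}$ with $1/p_{2}=1/2-(2\rho-1)/n$, but for $\rho<1/2$ this embedding is false: $D(A^{\rho})\subset H^{2\rho}$ carries fewer than one full derivative, so $|\nabla\v|_{L^{p_{2}}}$ need not be finite at all. Your remark that ``$\rho+\delta>1/2$ is exactly what is required for the $\nabla\v$ factor to be handled'' points at the right condition but misidentifies the mechanism. The fix is to exploit $\operatorname{div}\u=0$ to write $\u\cdot\nabla\v=\nabla\cdot(\u\otimes\v)$ and integrate by parts in the pairing, transferring the derivative onto $A^{-\delta}\phi$; the single gradient is then shared between the factors carrying regularity $\rho$ and $\delta$, and $\rho+\delta>1/2$ is exactly the threshold that makes this splitting work (via the $L^{p}$ theory of $A^{1/2}$ or, equivalently, the product rules in Bessel potential spaces used in \cite{Giga}). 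With that adjustment your argument covers the full stated range and matches the original proof.
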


Also, we consider the Sobolev inequality \cite{Giga}, 
\[
|\u|_{L^{r}(\Omega )}\leq C_{2}\left\vert \u\right\vert _{%
\H^{\beta }},\mbox{ if
}\frac{1}{r}\geq \frac{1}{2}-\frac{\beta }{n}>0, 
\]%
and the inequality due to Giga and Miyakawa \cite{Giga} 
\begin{equation}
|\u|_{L^{r}(\Omega)}\leq C_{3}|A^{\gamma }\u|,\mbox{ if }%
\frac{1}{r}\geq \frac{1}{2}-\frac{2\gamma }{n}>0.  \label{b3}
\end{equation}%
Here, we note that if $r=n$ in (\ref{b3}) it follows 
\[
|\u|_{L^{n}(\Omega )}\leq C_{3}|A^{\gamma }\u|,\mbox{ with }%
\gamma =\frac{n}{4}-\frac{1}{2}. 
\]

\begin{lemma} \label{lema3} (Eq. (2.8)  in Kato \cite{Kato}) 
If $\u\in D(A^{\theta })$ and $0\leq \theta <\beta $, then

\[
|A^{\theta }\u(x)|\leq \mu ^{\theta -\beta }|A^{\beta }\u
(x)| 
\]%
where $\mu =\min \lambda _{j}>0,$ where $\{ \lambda_j\}_{j=1}^{\infty}$ are the eigenvalues of the Stokes operator.
\end{lemma}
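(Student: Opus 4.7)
The plan is to exploit the spectral decomposition of the Stokes operator. Since $A = -P\Delta$ is self-adjoint and positive on $\H$ with compact resolvent (because $\Omega$ is bounded of class $C^{1,1}$), there exists a complete orthonormal basis $\{\e_j\}_{j=1}^{\infty}$ of $\H$ consisting of eigenfunctions, $A\e_j = \lambda_j \e_j$, with $0 < \mu = \lambda_1 \leq \lambda_2 \leq \cdots \to \infty$. For real $s$, fractional powers are defined by
\[
A^s \u = \sum_{j=1}^{\infty} \lambda_j^{s} c_j \e_j, \qquad c_j = (\u,\e_j),
\]
with domain $D(A^s) = \{\u \in \H : \sum_j \lambda_j^{2s} |c_j|^2 < \infty\}$, and in particular $|A^s \u|^2 = \sum_j \lambda_j^{2s}|c_j|^2$.

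The key step is a termwise comparison. For $0 \leq \theta < \beta$ one has $\theta - \beta < 0$, and since $\lambda_j \geq \mu > 0$ the monotonicity of $t \mapsto t^{\theta-\beta}$ (decreasing) yields
\[
\lambda_j^{2\theta} \;=\; \lambda_j^{2\beta}\,\lambda_j^{2(\theta-\beta)} \;\leq\; \mu^{2(\theta-\beta)}\,\lambda_j^{2\beta}
\]
for every $j$. Summing against $|c_j|^2$ and invoking the spectral identity above gives
\[
|A^{\theta}\u|^2 \;=\; \sum_{j=1}^{\infty} \lambda_j^{2\theta} |c_j|^2 \;\leq\; \mu^{2(\theta-\beta)} \sum_{j=1}^{\infty} \lambda_j^{2\beta} |c_j|^2 \;=\; \mu^{2(\theta-\beta)}\,|A^{\beta}\u|^2.
\]
Taking square roots produces $|A^{\theta}\u| \leq \mu^{\theta-\beta} |A^{\beta}\u|$, as claimed. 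Note that the assumption $\u \in D(A^\theta)$ together with $\theta < \beta$ is consistent with the right-hand side being finite exactly when $\u \in D(A^\beta)$, so the statement is vacuous (or automatic) otherwise.

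There is no serious obstacle here; the only point requiring a little care is to make sure the fractional powers are the ones induced by the spectral calculus (equivalently, by the Balakrishnan/functional calculus for positive self-adjoint operators), so that the diagonal representation $A^s \e_j = \lambda_j^s \e_j$ is valid and Parseval's identity can be applied. Once that is in place, the inequality reduces to the elementary scalar bound $\lambda_j^{\theta-\beta} \leq \mu^{\theta-\beta}$, and the lemma follows immediately.
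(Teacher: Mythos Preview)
Your argument via the spectral decomposition of the Stokes operator is correct and is the standard way to establish this inequality. The paper itself does not supply a proof of this lemma; it is simply stated as a known fact and then invoked later (e.g., with $\theta=0$, $\beta=1/2$ and with $\theta=1/2$, $\beta=1$). Your proof fills that gap cleanly, and the termwise bound $\lambda_j^{\theta-\beta}\le \mu^{\theta-\beta}$ is exactly the content behind the inequality.
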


\begin{lemma}
(Simon \cite{Simon})Let\ $X,B$\ and $Y$\ Banach spaces such that $%
X\hookrightarrow B\hookrightarrow Y$, where the first embedding is compact
and the second is continuous. Then, if $T>0$\ is finite, we have that the
following embedding is compact
\end{lemma}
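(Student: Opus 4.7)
The plan is to prove the standard Aubin--Lions--Simon compactness statement, which in its usual form asserts that a set of functions bounded in $L^{p}(0,T;X)$ whose time derivatives are bounded in $L^{q}(0,T;Y)$ is relatively compact in $L^{p}(0,T;B)$ (with a $C([0,T];B)$ variant when $p=\infty$). My approach rests on two ingredients: an Ehrling-type interpolation inequality, and the Fr\'echet--Kolmogorov--Riesz compactness criterion applied on the time interval.

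First I would establish the Ehrling inequality: given a compact embedding $X\hookrightarrow B$ and a continuous embedding $B\hookrightarrow Y$, for every $\varepsilon>0$ there exists $C_{\varepsilon}>0$ such that
\[
\|v\|_{B}\leq \varepsilon\|v\|_{X}+C_{\varepsilon}\|v\|_{Y}\qquad \text{for all }v\in X.
\]
This is proved by contradiction: negating it produces a sequence $\{v_{n}\}\subset X$ with $\|v_{n}\|_{B}=1$, $\{\|v_{n}\|_{X}\}$ bounded, and $\|v_{n}\|_{Y}\to 0$; by compactness of $X\hookrightarrow B$ one extracts a subsequence converging in $B$ to some $v$, and continuity of $B\hookrightarrow Y$ forces $v=0$, contradicting $\|v_{n}\|_{B}=1$.

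Next I would verify the two hypotheses of the Fr\'echet--Kolmogorov criterion for a sequence $\{f_{n}\}$ in the source space. For uniform smallness of time translations, the identity $f_{n}(t+h)-f_{n}(t)=\int_{t}^{t+h}\partial_{s}f_{n}(s)\,ds$ together with the bound on $\partial_{t}f_{n}$ in $L^{q}(0,T;Y)$ gives $\|f_{n}(\cdot+h)-f_{n}(\cdot)\|_{L^{p}(0,T-h;Y)}=o(1)$ as $h\to 0$ uniformly in $n$. Applying the Ehrling inequality termwise with a small $\varepsilon$, and using the $L^{p}(0,T;X)$-bound to control the $X$-norm term, one obtains the same smallness in the $B$-norm. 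The conditions at the endpoints of the time interval are handled similarly, using that $(0,T)$ has finite measure.

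The hard part is balancing the two scales correctly: the $X$-bound only yields an $O(1)$ estimate for the translation in $B$, while the derivative bound yields an $O(h^{\alpha})$ estimate but only in the weaker space $Y$. The Ehrling inequality is precisely the device that interpolates these two so that, after choosing $\varepsilon$ small and then $h$ small, the translation in $B$ is genuinely small uniformly in the sequence. Once Fr\'echet--Kolmogorov applies, relative compactness in $L^{p}(0,T;B)$ follows; for the $C([0,T];B)$ version one instead invokes Arzel\`a--Ascoli, with pointwise precompactness in $B$ coming from the compact embedding $X\hookrightarrow B$ and equicontinuity in $Y$ coming from the derivative bound, again upgraded to equicontinuity in $B$ by Ehrling.
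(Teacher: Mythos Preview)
The paper does not prove this lemma at all: it is simply quoted from Simon \cite{Simon} as a known compactness result and used as a black box later in the convergence argument. So there is no ``paper's own proof'' to compare your attempt against.

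Your sketch is the standard route to Simon's compactness theorem and is essentially correct for the version stated here (the embedding of $L^{\infty}(0,T;X)\cap\{\phi:\phi_{t}\in L^{r}(0,T;Y)\}$ into $C([0,T];B)$ for $1<r\le\infty$). The Ehrling inequality plus Arzel\`a--Ascoli is exactly the right mechanism: boundedness in $L^{\infty}(0,T;X)$ gives pointwise-in-time precompactness in $B$ via the compact embedding, the bound on $\phi_{t}$ in $L^{r}(0,T;Y)$ gives equicontinuity into $Y$ through $\phi(t+h)-\phi(t)=\int_{t}^{t+h}\phi_{s}\,ds$ and H\"older, and Ehrling upgrades that equicontinuity from $Y$ to $B$. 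One small point worth tightening if you ever write this out in full: elements of $L^{\infty}(0,T;X)$ are a priori only equivalence classes, so to apply Arzel\`a--Ascoli you must first select the continuous $Y$-valued representative (which exists because $\phi_{t}\in L^{r}(0,T;Y)$ with $r>1$ makes $\phi$ absolutely continuous into $Y$) and then argue that this representative actually takes values in $X$ with the claimed bound for every $t$, not just almost every $t$. This is where the hypothesis $r>1$ (rather than $r=1$) and the reflexivity-free arguments in Simon's paper do some work; your outline is fine as a plan but this step is where careless proofs sometimes leave a gap.
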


\[
L^{\infty }(0,T;X)\cap \{\phi \mbox{    }\mbox{:}\mbox{    }\phi _{t}\in
L^{r}(0,T;Y)\}\hookrightarrow C(0,T;B)\mbox{,}\mbox{  if
}1<r\leq \infty . 
\]%

\section{Results}

Our results are the following.

\begin{theorem} 
(Existence) Suppose that $\Omega ,\mbox{\boldmath $\beta_i$}\,\, i=1,2 $ satisfy the
assumption $A_0$ and $A_1$ respectively and $\F, \G\in H^{1}(\tau ;\H)$ $(\tau >0)$. Then, there exists a constant $M >0$
such that if 
\[
\sup_{0\leq t\leq \tau }(\vert \F\vert _{\L^{n/2}(\Omega)}+ \vert \G\vert _{\L^{n/2}(\Omega) })\leq M
\]
the problem (\ref{Ch1})-(\ref{Ch2}) has a $\tau $-periodic strong solution 
$(\widetilde{\u}(t) ,\widetilde{\h}(t)) $ satisfying 
\[
(\widetilde{\u},\widetilde{\h}) \in ( 
H^{2}(\tau ;\H)) ^{2}\cap (H^{1}(\tau ;D(A))) ^{2}\cap (
L^{\infty }(\tau ;D(A))) ^{2}\cap (
W^{1,\infty }(\tau ;\V)) ^{2}, 
\] 
such that  $\widetilde{\u} =\u-B_{1}$ and  $\widetilde{\h} =\h-B_{2}$ 
for some $\tau $-periodic extension $B_{1}$ and $B_{2}$ of the boundary
values $\mbox{\boldmath $\beta$} _{1}$ and $\mbox{\boldmath $\beta$} _{2}$ respectively and $(\u,\h)$ satisfying the problem 
(\ref{Ch1})-(\ref{Ch2}). Here the functions  $\F$ and $\G$ are related to the
external force $\f$ and to the boundary data (see Eq. (14))%
\[
\begin{array}{l}
\F(t) = \displaystyle \alpha P\f(t) - \alpha\frac{d}{dt} B_1(t) + \nu A B_1(t) -\alpha P(B_1(t)\cdot\nabla B_1(t)) + P (B_2(t)\cdot\nabla B_2(t)),  \\
\G(t) = -\displaystyle \frac{d}{dt} B_2(t) + \chi A B_2(t) + P(B_2(t)\cdot\nabla B_1(t)) - P(B_1(t)\cdot\nabla B_2(t)).
\end{array} 
\]
\end{theorem}
 {\bf Remark:} As it follows from the proofs of Theorems 5 and 6 $M$ needs to be small. This implies that
$\mbox{\boldmath $\beta_i$\,\, i=1,2}$ and $\f$ must be small. 

\vspace{0.5cm}
\noindent {\bf Remark:} We observe that the hypothesis $F\in H^1(\tau; \H)$ implies in particular that $\frac{\partial B_i}{\partial t} \in H^1(\tau;\H)$ and $\Delta B_i \in H^1(\tau,\H)$, 
but Lemma \ref{lema1} only says that $B_i \in C^1(\tau; \W^{2,2}(\Omega).$ We believe that working as in \cite{ko} and \cite{Morimoto} it will be possible to show this regularity, however this requires a more detailed analysis, 
which we will not do in this article.

\begin{theorem}
(Uniqueness) The solution for (\ref{Ch1})-(\ref{Ch2}) given in the above
theorem is unique.
\end{theorem}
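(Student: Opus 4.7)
The plan is to carry out an energy estimate on the difference of two periodic strong solutions and to conclude by combining the smallness condition on $M$ with the periodicity in $t$. Given two solutions $(\u_1,\h_1)$ and $(\u_2,\h_2)$ of (\ref{Ch1})--(\ref{Ch2}) sharing the same boundary traces $\mbox{\boldmath $\beta$}_1,\mbox{\boldmath $\beta$}_2$, set $\w=\u_1-\u_2$ and $\j=\h_1-\h_2$; since the inhomogeneous data cancel in the subtraction, $\w(\cdot,t),\j(\cdot,t)\in\V$ for a.e.\ $t$. Subtracting the two systems and rewriting $\u_1\cdot\nabla\u_1-\u_2\cdot\nabla\u_2=\u_1\cdot\nabla\w+\w\cdot\nabla\u_2$, and analogously for the other three convective products, yields a coupled linear system for $(\w,\j)$ with variable coefficients depending on $(\u_1,\u_2,\h_1,\h_2)$.

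Next I would test the $\w$-equation against $\w$ and the $\j$-equation against $(\mu/\rho)\,\j$. The transport terms $(\u_1\cdot\nabla\w,\w)$ and $(\u_1\cdot\nabla\j,\j)$ vanish because $\mathrm{div}\,\u_1=0$ and $\w,\j$ are zero on $\partial\Omega$; the pressure terms drop out for the same reason; and the cross contributions $(\h_1\cdot\nabla\j,\w)$ and $(\h_1\cdot\nabla\w,\j)$ cancel each other after integration by parts, using $\mathrm{div}\,\h_1=0$ and the vanishing traces of $\w,\j$. The surviving identity has the form
\[
\frac{1}{2}\frac{d}{dt}\Bigl(|\w|^2+\frac{\mu}{\rho}|\j|^2\Bigr)+\frac{\eta}{\rho}|\nabla\w|^2+\frac{1}{\rho\sigma}|\nabla\j|^2=-(\w\cdot\nabla\u_2,\w)+\frac{\mu}{\rho}\bigl[(\j\cdot\nabla\h_2,\w)-(\w\cdot\nabla\h_2,\j)+(\j\cdot\nabla\u_2,\j)\bigr].
\]
Each trilinear term on the right is then bounded by Hölder's inequality together with the Sobolev embeddings recalled above and the estimate (\ref{b1}), absorbing a small fraction of $|\nabla\w|^2+|\nabla\j|^2$ via Young's inequality; the remaining factors involve fractional Stokes norms of $\u_2,\h_2$ controlled via Lemma \ref{lema3}, which, by the existence theorem, are uniformly bounded in $t$ by a quantity $K(M)$ that tends to zero as $M\to 0$.

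Combining these bounds with Poincaré's inequality on $\V$, and shrinking the constant $K_0$ if necessary, one obtains
\[
\frac{d}{dt}Y(t)+\delta\,Y(t)\le 0,\qquad Y(t):=|\w(t)|^2+\frac{\mu}{\rho}|\j(t)|^2,
\]
with $\delta>0$ independent of $t$. Since $Y$ is nonnegative and $\tau$-periodic, integrating over $[0,\tau]$ gives $\delta\int_0^\tau Y(t)\,dt\le Y(0)-Y(\tau)=0$, whence $Y\equiv 0$ and therefore $\u_1=\u_2$, $\h_1=\h_2$.

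The main obstacle I anticipate is making the estimate of the exchange terms $(\j\cdot\nabla\h_2,\w)-(\w\cdot\nabla\h_2,\j)$ and of $(\w\cdot\nabla\u_2,\w)$, $(\j\cdot\nabla\u_2,\j)$ sharp enough that the constant multiplying the uniform-in-$t$ bound on $(\u_2,\h_2)$ is strictly smaller than the dissipation coefficient delivered by Poincaré's inequality; this is precisely where the smallness of $M$ coming from the existence theorem has to be invoked, and it explains why the hypothesis $M\le K_0$ is inherited by the uniqueness statement.
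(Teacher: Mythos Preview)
Your argument is correct and follows essentially the same energy-estimate strategy as the paper: subtract the two systems, test the velocity difference against itself and the magnetic difference against a suitable multiple of itself, exploit the divergence-free cancellations, and use the smallness of $M$ to absorb the remaining trilinear terms into the dissipation. Two minor points of comparison are worth noting. First, the paper works with the reduced variables $\widetilde{\u}_i=\u_i-B_1$, $\widetilde{\h}_i=\h_i-B_2$ and hence carries explicit $B_1,B_2$ terms in the difference equations, whereas you subtract the original systems directly; the bookkeeping is equivalent since your coefficient fields $\u_2,\h_2$ already contain the extensions. Second, your concluding step---integrating $\frac{d}{dt}Y+\delta Y\le 0$ over one period and using $Y(0)=Y(\tau)$ to force $\int_0^\tau Y=0$---is cleaner than the paper's route, which derives exponential decay $Y(t)\le Y(0)e^{-Qt}$ and then invokes periodicity together with $n\to\infty$ to conclude $Y\equiv 0$; both arguments are valid, but yours avoids the extra limiting step.
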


Now, we consider the initial-boundary value problem MHD
\begin{equation}
\label{PVI1} \left\{
\begin{array}{l}
{\displaystyle\frac{\partial{\u}}{\partial t}}\,+\,({\u}\cdot\nabla){\u}\, -\,\displaystyle\frac{\eta}{\rho}\Delta{\u}\,+\,\nabla\left(
p^* + \frac{\mu}{2}\h^2\right) \,=\,{\f}\qquad\,,
\\
\mbox{div }{\u}\,=\,0\qquad\mbox{in }Q_T\,,
\\
{\displaystyle\frac{\partial {\h}}{\partial t}}\, +\,({\u}\cdot\nabla)\,{\h}\,-\,({\h}\cdot\nabla)\,{\u}\, -\,\displaystyle\frac{1}{\bar{\mu}\sigma}\Delta{\h}\,
=\, {\rm grad}\, w \qquad\mbox{in }Q_T\,,
\end{array}
\right.
\end{equation}
\noindent with boundary and initial
conditions
\begin{equation}
\label{eqmariano2} \left\{
\begin{array}{l}
\u\left\vert _{\partial \Omega }\right. = \mbox{\boldmath $\beta$} _{1}(x,t) \\
\h\left\vert _{\partial \Omega }\right. = \mbox{\boldmath $\beta$} _{2}(x,t)\\
\u(x,0)=\u_{0}(x)\qquad\mbox{in }\Omega \,, \\
\w(x,0)=\w_{0}(x)\qquad\mbox{in }\Omega\,,
\end{array}
\right.
\end{equation}

The following result is a $\H^1$-stability result for the initial-value problem (\ref{PVI1})-(\ref{eqmariano2}) associated to the system (\ref{Ch1})- (\ref{LS0})
\begin{theorem}
Let $\F, \G\in H^{1}(\tau ;\H)$$(\tau >0),$ then there exist three
positives numbers $\gamma _{1},\gamma _{2}$ and $\gamma _{3}$ depending on
the viscosity coefficient $\nu $ and the size of the domain such that if $\F,\G$ satisfy%
\begin{equation}
\left\vert \F\right\vert _{L^{\infty }(0,\infty ;\L^{2}(\Omega
)^{2})}^{2}+ \left\vert \G\right\vert _{L^{\infty }(0,\infty ;\L^{2}(\Omega
)^{2})}^{2}\leq \gamma _{3},  \label{S1}
\end{equation}%
and $\left\{ \left( \u_{2}(t),\h_{2}
(t) \right) \right\} _{t\geq 0}$ is a strong solution of the system (\ref{Ch1})-(\ref{LS0})
with initial condition $(\u_{0},\h_{0}) $ satisfying 
\begin{equation}
\left \vert \u_{0}\right \vert _{\H^{1}}^{2}\leq \gamma _{1} ~~~\rm{and}~~~ \left \vert \h_{0}\right \vert _{\H^{1}}^{2}\leq \gamma _{2}
\label{S2}
\end{equation}%
and $\left\{(\u_{1}(t), \h_{1}(
t)) \right\} _{t\geq 0}$ is any other strong solution of (\ref{Ch1})-(\ref{LS0}), we have
\begin{equation}
\lim_{t\rightarrow \infty }\left\vert \u_{1}(t) -\u_{2}(t) \right\vert _{\H^{1}}^{2}=0  ~~~\rm{and}~~~
\lim_{t\rightarrow \infty }\left\vert \h_{1}(t) -\h_{2}(t) \right\vert _{\H^{1}}^{2}=0.  \label{S3}
\end{equation}%
The convergence rate in (\ref{S3}) is exponential.
\end{theorem}

A direct consequence of above theorem is the following. 

\begin{theorem}
Assume that $\F, \G\in H^{1}(\tau ;\H)$ $(\tau >0)$ and (\ref{S1}) hold
true, then for any two strong solution $(\u_{1}(t),
\h_{1}(t)) $ and $(\u_{2}(t), \h_{2}(t)) $ defined on the time
interval $[0,\infty) $ of the MHD equations (\ref{Ch1})-(\ref{LS0}), we have%
\begin{equation}
\lim_{t\rightarrow \infty }\left\vert \u_{1}(t) -\u_{2}(t) \right\vert _{\H^{1}}^{2}=0  ~~~\rm{and}~~~ 
\lim_{t\rightarrow \infty }\left\vert \h_{1}(t) -\h_{2}(t) \right\vert _{\H^{1}}^{2}=0.  \label{S4}
\end{equation}%
The convergence rate in (\ref{S4}) is exponential.
\end{theorem}

%{\bf Remark:}
%When the system (1)-(2) is not time-periodic,  it can be completed with the
%initial conditions 
%\[
%\mathbf{u}\left( x,0\right) =\mathbf{u}_{0}\left( x\right) \qquad \mathbf{h}%
%\left( x,0\right) =\mathbf{h}_{0}\left( x\right) ,\qquad x\in \Omega.  
%\]%
%However, when the system is time-periodic, what is our case, it must be
%completed with the initial conditions%
%\begin{equation}
%\mathbf{u}\left( x,0\right) =\mathbf{u}_{0}\left( x,\tau \right) \qquad 
%\mathbf{h}\left( x,0\right) =\mathbf{h}_{0}\left( x,\tau \right) ,\qquad
%x\in \Omega ,  \label{b4}
%\end{equation}%
%which are compatible with the condition (3) of the paper. Thus, when we refer to
%the initial conditions in theorem 7, we implicitly refer to the 
%conditions of type (\ref{b4}),  see, e.g. Refs. \cite{Blanca} and \cite{Morimoto}.  

Our main result is

\begin{theorem}[Stability]
Under the hypotheses of existence theorem, there exists a globally
asymptotically $\H^{1}$-stable time periodic strong solution $(\u,\h) $ to magnetohydrodynamic type equations (\ref{Ch1}). That is,
any other strong solution tends to this time-periodic solution $(\u,\h) $ asymptotically in the $\H^{1}$ sense.
\end{theorem}

\noindent  {\bf Remark:} With the periodic external force $\F, \G$ fixed, the previous result suggests that for any initial data 
$\v_0,\b_0 \in \V$, the unique strong solution obtained for $\v,\b$ tends to  unique strong periodic solution $\u,\h$ exponentially by a norm in $\H^1$.

\section{Approximate Problem and a priori estimates}

In this section we go along the lines of Ref. \cite{NRR} in which the homogeneous case was considered, using the spectral Galerkin method   
together with compactness arguments in order to prove  the existence and the uniqueness of the solution. 
The principal problem is to obtain the uniform boundedness of certain norms of $\u^{k}(t)$ and $\h^{k}(t)$
at some point $t^{\ast }$.  This difficulty was early treated by Heywood \cite%
{Heywood} to prove the regularity of the classical solutions for
Navier-Stokes equations.

The variables  $(\widetilde{\u}+B_{1},\widetilde{\h}+B_{2})$ satisfy the following equations:
\begin{equation}
\begin{array}{l}
\displaystyle\alpha \frac{\partial}{\partial t}( \widetilde{\u}+B_{1}) - \nu \Delta (\widetilde{\u}+B_{1}) +\alpha ( 
\widetilde{\u}+B_{1}) \cdot \nabla (\widetilde{\u
}+B_{1}) - (\widetilde{\h}+B_{2}) \cdot
\nabla (\widetilde{\h}+B_{2}) \medskip \\ 
=\displaystyle\alpha \f-\frac{1}{\mu }\nabla \left( p^{\ast }+\frac{\mu }{2}%
\left( \widetilde{\h}+B_{2}\right) ^{2}\right) \\ 
\\ 
\displaystyle\frac{\partial}{\partial t} (\widetilde{\h}+B_{2})
-\chi \Delta (\widetilde{\h}+B_{2}) + ( 
\widetilde{\u}+B_{1}) \cdot \nabla (\widetilde{%
\h}+B_{2}) - (\widetilde{\h}%
+B_{2}) \cdot \nabla (\widetilde{\u}+B_{1})
\medskip \\ 
=-{\rm grad}\,w.%
\end{array}
\label{m1}
\end{equation}

{\bf Remark 1:}
To ensure the periodicity of $B_{1}$ and $B_{2}$ we can see, for example, lemma 3.1 of Morimoto, ref.\cite{Morimoto} p. 636 of
the reference, we enunciated it in Lemma 1.

{\bf Remark 2:} In what follows we omit ``tilde'' over $\widetilde{\u}$ and $\widetilde{\h}.$ Instead, we will simple write $\u$ and $\h.$ This is done for the brevity of the following formulae. 

{\bf Remark 3:} We remind that the external force field $\f$ is $\tau-$periodic throughout all the paper.

Here we set $\alpha= \rho/\mu,$  $\nu = \eta/\mu$  and  $\chi=1/{\mu\sigma}.$
By putting $\widetilde{\u}=\u$ and $\widetilde{\h}=%
\h$ and rearranging terms, we obtain%
\begin{equation}
\begin{array}{l}
\displaystyle\alpha \frac{\partial \u}{\partial t}-\nu \Delta \u+\alpha 
\u\cdot \nabla \u- \h\cdot \nabla \h+\alpha \frac{%
\partial B_{1}}{\partial t}-\nu \Delta B_{1}+\alpha B_{1}\cdot \nabla B_{1}+%
\alpha\u\cdot \nabla B_{1}\medskip \\ 
\displaystyle + \alpha B_{1}\cdot \nabla \u - B_{2}\cdot \nabla \h - \h\cdot
\nabla B_{2} - B_{2}\cdot \nabla B_{2}= \alpha \f - \frac{1}{\mu }\nabla
\left( p^{\ast }+\frac{\mu }{2}\left( \h+B_{2}\right) ^{2}\right),
\\ 
\\ 
\displaystyle\frac{\partial \h}{\partial t} - \chi \Delta \h + \u\cdot \nabla
\h - \h\cdot \nabla \u + \frac{\partial B_{2}}{\partial t}-\chi \Delta
B_{2} + B_{1}\cdot \nabla \h - \h\cdot \nabla B_{1}
+ \alpha\u\cdot \nabla B_{2}\medskip \\ 
- \alpha B_{2}\cdot \nabla B_{1}-B_{2}\cdot \nabla \u + B_{1}\cdot
\nabla B_{2}=-{\rm grad}\,w.%
\end{array}
\label{m2}
\end{equation}

By using the operator $P,$ the periodic problem (\ref{Ch1})-(\ref{Ch2}) is
formulated as follows%
%\begin{equation}
%\begin{array}{ll}
%\displaystyle\alpha \frac{ d\u(t) }{ dt}+\nu A\u
%(t) + \alpha P(\u(t) \cdot \nabla 
%\u(t)) - P(\h(t) \cdot \nabla \h(t))  \medskip &  \\ 
% + P(\u(t) \cdot \nabla B_{1}(t)) + P( B_{1}(t) \cdot \nabla \u(t)) \medskip &  \\ 
%- P(B_{2}(t) \cdot \nabla \h(
%t)) - P(\h(t) \cdot \nabla B_{2}(
%t))  \bigskip = \F(t) &  \\ 
%\displaystyle\frac{d\h}{dt}+\chi A\h + P(\u
%(t) \cdot \nabla \h(t)) -
%P(\h(t) \cdot \nabla \u(t)
%) \medskip &  \\ 
%+ P(B_{1}(t) \cdot \nabla \h(
%t)) - P(\h(t) \cdot \nabla B_{1}(
%t)) + P(\u(t) \cdot
%\nabla B_{2}(t)) \medskip &  \\ 
% - P(B_{2}(t) \cdot \nabla \u(t)
%)  =\G(t), & 
%\end{array}
%\label{m2-5}
%\end{equation}
%
%\[
%\u(x,t+\tau) =\u(x,t) ;\qquad 
%\h(x,t+\tau) =\h(x,t),
%\]
%or, equivalently,
\begin{equation}\label{arica1}
\begin{array}{ll}
\displaystyle\alpha \frac{d}{dt} \u(t) +\nu A\u(t)+\alpha P(\u(t)\cdot\nabla\u(t)) -P(\h(t)\cdot\nabla\h(t)) + L_1 \u(t) + L_2\h(t) = \F(t), \\
\\
\displaystyle\frac{d}{dt}\h(t) +\chi A \h(t) +P(\u(t)\cdot\nabla\h(t)) - P(\h(t)\cdot\nabla\u(t)) + L_3\h(t) + L_4\u(t) =\G(t),
\end{array}
\end{equation}
\[
\u(x,t+\tau) =\u(x,t) ;\qquad 
\h(x,t+\tau) =\h(x,t),
\]
where 
\begin{equation}
\left\{\begin{array}{ll}
\displaystyle L_1\u(t) &= P(\u(t)\cdot\nabla B_1(t)) + P(B_1(t)\cdot\nabla\u(t)), \\
L_2\h(t) &= -P(\h(t)\cdot\nabla B_2(t))-P(B_2(t)\cdot\nabla\h(t)),  \\
\F(t) &= \displaystyle \alpha P\f(t) - \alpha\frac{d}{dt} B_1(t) + \nu A B_1(t) -\alpha P(B_1(t)\cdot\nabla B_1(t)) + P (B_2(t)\cdot\nabla B_2(t)),  \\
L_3\h(t) &= P(B_1(t) \cdot\nabla \h(t)) - P(\h(t)\cdot\nabla B_1(t)), \\
L_4 \u(t) &= -P(B_2(t) \cdot\nabla \u(t)) + P(\u(t)\cdot\nabla B_2(t)), \\
\G(t) &= -\displaystyle \frac{d}{dt} B_2(t) + \chi A B_2(t) + P(B_2(t)\cdot\nabla B_1(t)) - P(B_1(t)\cdot\nabla B_2(t)).
\end{array}\right.
\end{equation}

We consider $\V_{k}=span\{ \w_{1}(x),\w_{2}(x),...,\w_{k}(x)\}$ and the
approximations $\u^{k}(t)=\sum_{j=1}^{k}c_{jk}(t)\w_{j}(x)$ and $%
\h^{k}(t)=\sum_{j=1}^{k}d_{jk}(t)\w_{j}(x),$ of $\u$ and $%
\h,$ respectively, satisfying the following system of ordinary
differential equations. Here we reproduce the equations similar to Eq. (3.1) and Eq. (3.2) of \cite{NRR}, however, the terms with operators $L_1$ and $L_2$ 
are new in comparison with Eq. (3.1) and Eq. (3.2) of \cite{NRR} since these operators contain inhomogeneous boundary condition,

\begin{equation}
\begin{array}{l}
(\alpha \u_{t}^{k}+\nu A\u^{k}+\alpha P(\u
^{k}\cdot \nabla \u^{k}) - P(\h
^{k}\cdot \nabla \h^{k}) +L_1 \u^k + L_2\h^k,\w_{j}) = (\F,\w_j) \medskip \\ 
(\h_{t}^{k}+\chi A\h^{k} + P(\u
^{k}\cdot \nabla \h^{k}) - P(\h^{k}\cdot
\nabla \u^{k}) + L_3\h^k + L_4 \u^k,\w_{j}) = (\G,\w_j) \\ 
\u^{k}(x,t+\tau) =\u^{k}(x,t)
;\qquad \h^{k}(x,t+\tau) =\h^{k}(
x,t).%
\end{array}
\label{m3}
\end{equation}%
\[
\]

To show that system (\ref{m3}) has an unique $\tau -$periodic solution,
we consider the following linearized problem:%
\begin{equation}
\begin{array}{l}
(\alpha \u_{t}^{k}+\nu A\u^{k},\w_{j}) = (\F,\w_{j}) - (L_{1}\v^{k}
,\w_{j}) - (L_{2}\b^{k} ,\w_{j}) -\alpha(P(\v^k\cdot\nabla\v^k),\w_j) + (P(\b^k\cdot\nabla\b^k),\w_j) \medskip
\\ 
(\h_{t}^{k}+\chi A\h^{k},\w_{j}) = (\G,\w_j)  -(L_{3}\b^{k} ,\w_{j}) - (L_{4}\v^{k} ,\w_{j}) -(P(\v^k\cdot\nabla\b^k),\w_j) + (P(\b^k\cdot\nabla\v^k),\w_j)%
\end{array}
\label{m4}
\end{equation}%
where $\v^{k}(t)=\sum_{j=1}^{k}e_{jk}(t)\mbox{\boldmath $\omega$}_{j}(x)$ and $\b^{k}(t)=\sum_{j=1}^{k}g_{jk}(t)\mbox{\boldmath $\omega$}_{j}(x)$ are functions given in $%
C^{1}(\tau ;\V_{k}).$

It is well known that the linearized system (\ref{m4}) has an unique $\tau -$%
periodic solution $(\mathbf{u}^{k}(t),\mathbf{h}^{k}(t))\in (C^{1}(\tau
; \V_{k}))^{2}$ (see for instance, \cite{Amann}, \cite{Burton}). Consider the
map: $\Phi :(\v^{k},\b^{k})\rightarrow (\u^{k},\h^{k})$ in the
space $C^{0}(\tau ;\V_{k})\times C^{0}(\tau ;\V_{k})$. We shall show that $%
\Phi $ has a fixed point by Leray-Schauder Theorem.

We prove that for every $(\u^{k},\h^{k}) $ and $%
\lambda \in [0,1] $ satisfying $\lambda \Phi (\u
^{k},\h^{k}) =(\u^{k},\h^{k}) ,$%
\begin{equation}
\sup_{0\leq t\leq \tau }|\u^{k}(t)|\leq C\qquad \mbox{and \qquad }%
\sup_{0\leq t\leq \tau }|\h^{k}(t)|\leq C  \label{mk4/5}
\end{equation}%
where $C$ is a positive constant independent of $\lambda .$

For $\lambda =0,(\u^{k},\h^{k}) =(
0,0).$ Let $\lambda >0$ and assume that $\lambda \Phi ( \u
^{k},\h^{k}) =(\u^{k},\h^{k}).$
Then, from (\ref{m4}), we obtain
\begin{equation}
\begin{array}{l}
\displaystyle\frac{1}{2}\frac{d}{dt}\alpha \vert \u^{k}\vert
^{2}+\nu \vert \nabla \u^{k}\vert ^{2}=\lambda (
\alpha \F,\u^{k}) -\lambda (L_{1}\u^{k},\u^{k}) -\lambda (L_{2}\h^{k} ,\u^{k}) + \lambda(P(\h^k\cdot\nabla\h^k,\u^k),
\medskip \\ 
\displaystyle\frac{1}{2}\frac{d}{dt}\vert \h^{k}\vert ^{2}+\chi
\vert \nabla \h^{k}\vert ^{2}= \lambda(\G,\h^k)-\lambda (L_{3}\h^{k} ,\h^{k}) -\lambda (L_{4}\u^{k},\h^{k}) +\lambda(P(\h^k\cdot\nabla\u^k),\h^k) 
\end{array}
\label{mk5}
\end{equation}%
Summing the above equalities, we obtain%
\begin{eqnarray} \label{igor1}
&&\frac{1}{2}\frac{d}{dt}(\alpha \vert \u^{k}\vert
^{2}+ \vert \h^{k}\vert ^{2}) +\nu \vert \nabla 
\u^{k}\vert ^{2}+\chi \vert \nabla \h
^{k}\vert ^{2}\medskip  \nonumber \\
&=&\lambda (\F,\u^{k})+ \lambda(\G;\h^k) -\lambda (L_{1}\u^{k},\u^{k})
-\lambda (L_{2}\h^{k} ,\u^{k}) \medskip  \\
&&-\lambda (L_{3}\h^{k} ,\h^{k}) -\lambda (L_{4}\u^{k},\h^{k}) \medskip  \nonumber \\ 
&& + \lambda(P(\h^k\cdot\nabla\h^k),\u^k) + \lambda (P(\h^k\cdot\nabla\u^k,\h^k).
\nonumber
\end{eqnarray}

We observe that, since $\lambda \leq 1,$ we obtain%
\begin{equation}
\begin{array}{ll}
\lambda (\F,\u^{k}) & \leq \vert \F\vert \vert \nabla \u^{k}\vert, \medskip \\ 
\lambda (\G,\h^k) & \leq  \vert \G\vert \vert \nabla \h^{k}\vert. \\ 
\end{array}
\label{mk7}
\end{equation}%
Now, we use the Lemma \ref{lema1}, to obtain%
\begin{equation}
\begin{array}{ll}
-\lambda(L_1 \u^k,\u^k) = -\lambda(\u^k\cdot\nabla\B_1,\u^k) & \leq  \epsilon_1 \vert\nabla\u^k\vert^2, \\
-\lambda (L_2\h^k,\u^k) -\lambda(L_4\u^k,\h^k) = -\lambda (\h^k\cdot\nabla B_2,\u^k)-\lambda (\u^k \cdot\nabla B_2,\h^k) & \leq \epsilon_3 \vert \nabla\u^k\vert\vert\nabla\h^k\vert, \\
-\lambda (L_3\h^{k},\h^k) = (\h^k\cdot\nabla B_1,\h^k) & \leq \epsilon_2 \vert \nabla \h^k\vert^2.
\end{array}
\label{mk9}
\end{equation}%
Using the Young inequality, taking $\epsilon_1>0, \epsilon_2>0$ and $\epsilon_3>0$ suitable and summing the estimates (\ref{mk7}) and  (\ref{mk9}) together with the equality (\ref{igor1}), we have%
\begin{equation}
\begin{array}{l}
\displaystyle\frac{1}{2}\frac{d}{dt}(\alpha \vert \u^{k}\vert
^{2}+ \vert \h^{k}\vert ^{2}) +\nu \vert \nabla 
\u^{k}\vert ^{2}+\chi \vert \nabla \h
^{k}\vert ^{2}\medskip \\ 
\leq C\vert \F\vert ^{2}+ C\vert\G\vert^2.
\end{array}
\label{mk10}
\end{equation}%

Integrating in $t$ and using the periodicity of $(\u^{k},%
\h^{k}) $ we have%
\[
\int\nolimits_{0}^{\tau }\left( \nu \vert \nabla \u%
^{k}\vert ^{2}+\chi \vert \nabla \h^{k}\vert^{2}\right) dt\leq CM^{2} \tau, 
\]%
whence by the mean value theorem for integrals, there exists $t^{\ast }\in [
0,\tau] $ such that%
\begin{equation}
\nu\vert \nabla \u^{k}(t^{\ast }) \vert
^{2}+\chi \vert \nabla \h^{k}(t^{\ast })\vert ^{2}\leq CM^{2},  \label{mk11}
\end{equation}
$M$ is defined in Theorem 5.

On the other hand, by using the Lemma 3,\ with $\theta =0$ and $%
\beta =1/2,$%
\[
\vert \u^{k}(t^{\ast }) \vert \leq \mu
^{-1/2}\vert \nabla \u^{k}(t^{\ast })\vert 
\]%
and consequently%
\begin{equation}
\vert \u^{k}(t^{\ast })\vert ^{2}\leq \mu
^{-1}\vert \nabla \u^{k}(t^{\ast })\vert
^{2}\leq \frac{C}{\mu\nu}M^{2},  \label{N1}
\end{equation}%
analogously%
\begin{equation}
\vert \h^{k}(t^{\ast }) \vert ^{2}\leq \mu
^{-1}\vert \nabla \h^{k}(t^{\ast }) \vert
^{2}\leq \frac{C}{\mu\chi}M^{2}.  \label{N2}
\end{equation}

Finally, by integrating again (\ref{mk10}) from $t^{\ast }$ to $t+\tau ,$ with $t\in %
[0,\tau] ,$ we obtain (\ref{mk4/5}). As the map $\Phi $ is
continuous and compact in $C^{0}(\tau ;\V_{k}) $ we conclude the
existence of a fixed point $(\u^{k},\h^{k}) $
for $\Phi .$ Observe that (\ref{mk4/5}) holds for this $(\u
^{k},\h^{k}) .$

\begin{lemma} \label{lema7}
Let $(\u^{k}(t) ,\h^{k}(
t)) $ be the solution of (\ref{m3}). Suppose that%
\[
M<\min \left\{ \left( \frac{\nu}{P_{1}}\right) ^{2},\left( \frac{\chi }{P_{2}%
}\right) ^{2},1\right\}  
\]%
where%
\[
\begin{array}{ll}
P_{1} & =z \frac{\nu}{C} \mu ^{1-\gamma
}+C_{1}\alpha \frac{C}{\nu} \mu ^{\gamma
-3/2}+d_{5}+d_{4}\overline{C}\bigskip \\ 
& +2C_{1} \frac{C}{\chi} \mu ^{\gamma -3/2}%
\overline{C},\bigskip \\ 
P_{2} & =d_{3} \frac{\chi}{C} \mu ^{1-\gamma }+%
\widetilde{C}_{9} \frac{C}{\nu} \mu ^{\gamma
-3/2}+d_{6}+d_{4} \overline{C}\bigskip \\ 
& +2C_{1} \frac{C}{\chi} \mu ^{\gamma -3/2}%
\overline{C},
\end{array}%
\]%
then, we have%
\[
\vert A^{\gamma }\u^{k}(t) \vert
^{2}+ \vert A^{\gamma }\h^{k}(t) \vert
^{2}\leq E\mu ^{2\gamma -3}M   
\]
with $\gamma= \frac{n}{4}- \frac{1}{2}$.
\end{lemma}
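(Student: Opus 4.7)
The plan is to differentiate $y(t):=\alpha|A^{\gamma}\u^{k}(t)|^{2}+|A^{\gamma}\h^{k}(t)|^{2}$ and derive an inequality that closes under the smallness of $M$. Since $A^{2\gamma}\u^{k}$ and $A^{2\gamma}\h^{k}$ lie in $\V_{k}$, we may test the first equation of (\ref{m3}) against $A^{2\gamma}\u^{k}$ and the second against $A^{2\gamma}\h^{k}$, sum them, and use $(A\w,A^{2\gamma}\w)=|A^{\gamma+1/2}\w|^{2}$. This yields
\[
\frac{1}{2}\frac{dy}{dt}+\nu|A^{\gamma+1/2}\u^{k}|^{2}+\chi|A^{\gamma+1/2}\h^{k}|^{2}=R(t),
\]
where $R(t)$ collects the inertial self-interactions of $\u^{k}$ and $\h^{k}$, the quadratic coupling between them, the forcing $(\alpha P\f,A^{2\gamma}\u^{k})$, and all the linear/constant terms involving $B_{1},B_{2},\partial_{t}B_{i},AB_{i}$.

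Each quadratic term $(P(\a\cdot\nabla\b),A^{2\gamma}\w)$ is rewritten as $(A^{-\delta}P(\a\cdot\nabla\b),A^{2\gamma+\delta}\w)$ with $\delta:=1/2-\gamma$, so that the Giga--Miyakawa proposition with $\theta=\gamma$, $\rho=\gamma+1/2$ (satisfying the admissibility $\delta+\theta+\rho=n/4+1/2$ and $\rho+\delta=1>1/2$) produces $C_{1}|A^{\gamma}\a|\,|A^{\gamma+1/2}\b|\,|A^{\gamma+1/2}\w|$. The terms mixing $\u^{k}$ or $\h^{k}$ with $B_{1}$ or $B_{2}$ are bounded by Lemma \ref{lema1} together with (\ref{b3}) and Lemma \ref{lema3}, while contributions from $\partial_{t}B_{i}$ and $AB_{i}$ are absorbed into an $L^{1}(\tau)$ function independent of $k$ using assumption A1. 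The forcing is treated via (\ref{b3}) with $r=n$, yielding a term controlled by $C\alpha M\,|A^{\gamma+1/2}\u^{k}|$.

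Applying Young's inequality to every term carrying a single power of $|A^{\gamma+1/2}\u^{k}|$ or $|A^{\gamma+1/2}\h^{k}|$ (using Lemma \ref{lema3} to interchange $|A^{\gamma}\cdot|$ with $|\nabla\cdot|$ in the coupling factors where needed), the inequality reorganizes in the shape
\[
\frac{dy}{dt}+\bigl(\nu-P_{1}\sqrt{y(t)}\bigr)|A^{\gamma+1/2}\u^{k}|^{2}+\bigl(\chi-P_{2}\sqrt{y(t)}\bigr)|A^{\gamma+1/2}\h^{k}|^{2}\leq G(t),
\]
with exactly the constants $P_{1},P_{2}$ in the statement and $G(t)\in L^{1}(\tau)$ bounded by $C M^{2}$ uniformly in $k$. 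At the time $t^{*}$ from (\ref{mk11}), Lemma \ref{lema3} with $\theta=\gamma$, $\beta=1/2$ gives $y(t^{*})\leq c_{0}\mu^{2\gamma-1}M^{2}/\min(\nu,\chi)$, which is strictly below $E\mu^{2\gamma-1}M$ once $M<1$.

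The conclusion follows by a continuity/bootstrap argument: as long as $y(t)\leq E\mu^{2\gamma-1}M$, the hypotheses $M<(\nu/P_{1})^{2}$ and $M<(\chi/P_{2})^{2}$ force $P_{1}\sqrt{y(t)}<\nu$ and $P_{2}\sqrt{y(t)}<\chi$, so both dissipation coefficients remain nonnegative, and integrating from $t^{*}$ to $t^{*}+\tau$ combined with the $\tau$-periodicity of $(\u^{k},\h^{k})$ confines $y$ to the desired band throughout $[0,\tau]$. The main obstacle is the careful bookkeeping in the second step: each of the many terms in $R(t)$ must be matched with an admissible triple $(\delta,\theta,\rho)$ for Giga--Miyakawa, and Young's inequality must be tuned so that the total coefficient on the right of $|A^{\gamma+1/2}\u^{k}|^{2}$ (resp.\ $|A^{\gamma+1/2}\h^{k}|^{2}$) equals exactly $P_{1}\sqrt{y(t)}$ (resp.\ $P_{2}\sqrt{y(t)}$); only with this precise form does the smallness hypothesis on $M$ close the bootstrap.
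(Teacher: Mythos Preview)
Your outline has the right skeleton---test (\ref{m3}) against $A^{2\gamma}\u^{k},A^{2\gamma}\h^{k}$, use Giga--Miyakawa with $\delta=\frac{1}{2}-\gamma$, $\theta=\gamma$, $\rho=\gamma+\frac{1}{2}$, and close by a continuation argument starting at $t^{*}$---but the precise mechanism you describe for closing the bootstrap does not work, and it is not the one the paper uses.

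The problem is your claim that the estimate reorganizes as
\[
\frac{dy}{dt}+\bigl(\nu-P_{1}\sqrt{y(t)}\bigr)|A^{\gamma+1/2}\u^{k}|^{2}+\bigl(\chi-P_{2}\sqrt{y(t)}\bigr)|A^{\gamma+1/2}\h^{k}|^{2}\le G(t)
\]
with \emph{exactly} the constants $P_{1},P_{2}$ of the statement. Look at how $P_{1},P_{2}$ are built: they contain the summands $d_{5},d_{6},d_{4}\overline{C}$ (coming from the mixed $\u^{k}$--$B_{i}$ and $\h^{k}$--$B_{i}$ terms, whose coefficients are bounded by $M$, not by $|A^{\gamma}\u^{k}|$ or $|A^{\gamma}\h^{k}|$) and the summands $z(\frac{v_{1}}{d_{0}})^{1/2}\mu^{-\gamma}$, $d_{3}(\frac{\chi_{1}}{d_{0}})^{1/2}\mu^{-\gamma}$ (coming from the forcing and the pure $B_{i}$ terms, which in the energy identity appear as $zM|A^{\gamma+1/2}\u^{k}|$ and $d_{3}M|A^{\gamma+1/2}\h^{k}|$, i.e.\ \emph{linear} in the dissipation norm with coefficient $M$). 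None of these is naturally a multiple of $\sqrt{y(t)}$; if you Young them as you propose, the linear terms contribute a fixed $\epsilon$ to the dissipation loss plus a constant into $G$, and the $d_{5}M,\,d_{6}M$ terms contribute $d_{5}M,\,d_{6}M$ (not $d_{5}\sqrt{y},\,d_{6}\sqrt{y}$). You therefore cannot manufacture the stated $P_{1},P_{2}$ this way. And even if you could, your closure sentence is false as written: the hypothesis $M<(\nu/P_{1})^{2}$ gives $P_{1}M^{1/2}<\nu$, whereas under the bootstrap $y\le E\mu^{2\gamma-1}M$ you would only get $P_{1}\sqrt{y}\le P_{1}\sqrt{E}\,\mu^{\gamma-1/2}M^{1/2}$, which is not automatically $<\nu$.

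What the paper actually does is a \emph{pointwise} contradiction at the first exit time $T^{*}$. At $T^{*}$ one has the \emph{equalities} $|A^{\gamma}\u^{k}(T^{*})|=(d_{0}/v_{1})^{1/2}\mu^{\gamma-1/2}M^{1/2}$ and $|A^{\gamma}\h^{k}(T^{*})|=(d_{0}/\chi_{1})^{1/2}\mu^{\gamma-1/2}M^{1/2}$, and these equalities are used in \emph{both directions}: the coefficients $|A^{\gamma}\u^{k}|,|A^{\gamma}\h^{k}|$ in the trilinear terms are replaced by multiples of $M^{1/2}$, while the linear forcing/$B_{i}$ terms $zM|A^{\gamma+1/2}\u^{k}|$ are converted by writing $M^{1/2}=(v_{1}/d_{0})^{1/2}\mu^{1/2-\gamma}|A^{\gamma}\u^{k}(T^{*})|$ and then $|A^{\gamma}\u^{k}|\le\mu^{-1/2}|A^{\gamma+1/2}\u^{k}|$, which upgrades them to $z(v_{1}/d_{0})^{1/2}\mu^{-\gamma}M^{1/2}|A^{\gamma+1/2}\u^{k}|^{2}$. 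Only through this two-sided substitution at $T^{*}$ do all right-hand terms become $P_{1}M^{1/2}|A^{\gamma+1/2}\u^{k}|^{2}+P_{2}M^{1/2}|A^{\gamma+1/2}\h^{k}|^{2}$ with the stated $P_{1},P_{2}$; the smallness hypothesis then makes $\frac{d}{dt}\bigl(\alpha|A^{\gamma}\u^{k}|^{2}+|A^{\gamma}\h^{k}|^{2}\bigr)<0$ at $T^{*}$, contradicting the definition of $T^{*}$. There is no integration over $[t^{*},t^{*}+\tau]$ and no $G(t)$ in the closing step. Replace your bootstrap-by-integration paragraph with this local contradiction argument and the constants will line up.
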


\begin{proof}
The first part of the proof follows the proof of Lemma 2.1 of Ref.\cite{NRR}. Indeed, 
taking $A^{2\gamma }\u^{k}$ and $A^{2\gamma }\h^{k}$ as test functions in (\ref{m3}), we obtain

\begin{equation} \label{Cr1}
\begin{split}
\displaystyle &\frac{\alpha }{2}\frac{d}{dt}\vert A^{\gamma }\u
^{k}\vert ^{2}+\nu \vert A^{(1+2\gamma) /2}\u
^{k}\vert ^{2}=\medskip \\ 
&(\alpha \f (t) - \alpha P(\u^{k}\cdot
\nabla \u^{k}) + P(\h^{k}\cdot \nabla 
\h^{k}) - \alpha (B_{1})_{t}-\nu AB_{1},A^{2\gamma
}\u^{k}) \medskip \\ 
&-(\alpha P(B_{1}\cdot \nabla B_{1}) + P(\u
^{k}\cdot \nabla B_{1}) - P(B_{1}\cdot \nabla \u
^{k})  + P(B_{2}\cdot \nabla \h^{k})
,A^{2\gamma }\u^{k}) \medskip \\ 
&+ (P(\h^{k}\cdot \nabla B_{2}) + P(B_{2}\cdot
\nabla B_{2}) ,A^{2\gamma }\u^{k}),
\end{split}
\end{equation}

\begin{equation} \label{Cr2}
\begin{split}
\displaystyle &\frac{1}{2}\frac{d}{dt}\vert A^{\gamma }\h^{k}\vert
^{2}+\chi \vert A^{(1+2\gamma) /2}\h
^{k}\vert ^{2}=\medskip \\ 
&(- P(\u^{k}\cdot \nabla \h^{k}) 
+ P(\h^{k}\cdot \nabla \u^{k}) - (
B_{2}) _{t}-\chi AB_{2} - P(B_{1}\cdot \nabla \h
^{k}) ,A^{2\gamma }\h^{k}) \medskip \\ 
&+ (P(\h^{k}\cdot \nabla B_{1}^{k})
- P(\u^{k} \cdot \nabla B_{2})  - P(
B_{2}\cdot \nabla B_{1}) ,A^{2\gamma }\h^{k}) \medskip
\\ 
&- (P(B_{2}\cdot \nabla \u^{k}) - P(
B_{1}\cdot \nabla B_{2}) ,A^{2\gamma }\h^{k}).
\end{split}
\end{equation}

By using the Giga-Miyakawa estimate with $\theta =\gamma $ and $\rho
=(1+2\gamma) /2,$   we estimate terms in the right hand side of the above equalities as follows:
\[
\vert (\alpha \f(t) ,A^{2\gamma }\u
^{k}) \vert \leq \alpha \vert \f\vert
_{L^{n/2}}\vert A^{2\gamma }\u^{k}\vert _{L^{n/(
n-2) }}\leq \alpha \widehat{C}M\vert A^{(1+2\gamma)
/2}\u^{k}\vert,
\]%
here we use the H\"{o}lder's inequality
\begin{eqnarray*}
\vert (P\v\cdot \nabla \b ,A^{2\gamma }\mbox{\boldmath $\phi$}) \vert
&=&\vert (A^{\frac{2\gamma -1}{2}}P\v\cdot \nabla \b,A^{\frac{%
2\gamma +1}{2}}\mbox{\boldmath $\phi$} ) \vert \medskip \\
&\leq &C\vert A^{\gamma }\v\vert \vert A^{(1+2\gamma
) /2}\b\vert \vert A^{(1+2\gamma) /2}\mbox{\boldmath $\phi$}
\vert. 
\end{eqnarray*} 

In particular, the estimates of the right side of \eqref{Cr1} and \eqref{Cr2} may be done for each term. We take into account  
that $\Vert A^{2\gamma }\u\Vert \leq C\Vert A^{(2\gamma +1) /2}\u\Vert $ and estimate 
\[
\begin{array}{ll}
\vert (\alpha (B_{1}) _{t},A^{2\gamma }\u
^{k}) \vert & \leq \alpha C_{2}\vert (B_{1})
_{t}\vert \vert A^{(2\gamma +1) /2}\u
^{k}\vert, \bigskip \\ 
\vert (\nu AB_{1},A^{2\gamma }\u^{k}) \vert & 
\leq \vert ( \nu A^{\frac{2\gamma -1}{2}}AB_{1},A^{\frac{2\gamma +1%
}{2}}\u^{k}) \vert \medskip \\ 
& \leq \nu \overline{C_{3}}\vert AB_{1}\vert \vert A^{(
2\gamma +1) /2}\u^{k}\vert,%
\end{array}%
\]%
similarly%
\[
\begin{array}{ll}
\vert (\alpha P(B_{1}\cdot \nabla B_{1}) ,A^{2\gamma }%
\u^{k}) \vert & \leq \alpha C_{4}\vert A^{2\gamma
}B_{1}\vert \vert A^{(2\gamma +1) /2}B_{1}\vert
\vert A^{(2\gamma +1) /2}\u^{k}\vert, \bigskip
\\ 
\vert (P(\u^{k}\cdot \nabla B_{1})
,A^{2\gamma }\u^{k}) \vert & \leq C_{5}\vert
A^{3\gamma /2}B_{1}\vert \vert A^{(2\gamma +1) /2}%
\u^{k}\vert ^{2},\bigskip \\ 
\vert (P(B_{1}\cdot \nabla \u^{k})
,A^{2\gamma }\u^{k}) \vert & \leq C_{6}\vert
A^{\gamma }B_{1}\vert \vert A^{(2\gamma +1) /2}%
\u^{k}\vert ^{2},\bigskip \\ 
\vert (P( B_{2}\cdot \nabla \h^{k})
,A^{2\gamma }\u^{k}) \vert & \leq C_{7}\vert
A^{\gamma }B_{2}\vert \vert A^{(2\gamma +1) /2}%
\h^{k}\vert \vert A^{(2\gamma +1) /2}\u
^{k}\vert, \bigskip \\ 
\vert (P(\h^{k}\cdot \nabla B_{2})
,A^{2\gamma }\u^{k}) \vert & \leq C_{8}\vert
A^{3\gamma /2}B_{2}\vert \vert A^{(2\gamma +1) /2}%
\h^{k}\vert \vert A^{(2\gamma +1) /2}\u%
^{k}\vert, \bigskip \\ 
\vert (P(B_{2}\cdot \nabla B_{2}) ,A^{2\gamma }%
\u^{k}) \vert & \leq C_{9}\vert A^{\gamma
}B_{2}\vert \vert A^{(2\gamma +1) /2}B_{2}\vert
\vert A^{(2\gamma +1) /2}\u^{k}\vert.%
\end{array}%
\]%
Now, we bound the terms of \eqref{Cr2}% 
\begin{eqnarray*}
\vert (( B_{2}) _{t},A^{2\gamma }\h^{k})
\vert &\leq &\vert (A^{(2\gamma -1) /2}(
B_{2}) _{t},A^{(2\gamma +1) /2}\h^{k})
\vert \bigskip \\
&\leq &\widetilde{C_{1}}\Vert (B_{2}) _{t}\Vert
\vert A^{(2\gamma +1) /2}\h^{k}\vert,
\end{eqnarray*}

\begin{eqnarray*}
\vert \chi (AB_{2},A^{2\gamma }\h^{k}) \vert
&\leq &\vert (\chi A^{(2\gamma -1)
/2}AB_{2},A^{(2\gamma +1) /2}\h^{k}) \vert
\bigskip \\
&\leq &\widetilde{C_{2}}\vert A^{(2\gamma +1)
/2}B_{2}\vert \vert A^{(2\gamma +1) /2}\h
^{k}\vert,
\end{eqnarray*}%
\[
\begin{array}{ll}
\vert (P( B_{1}\cdot \nabla \h^{k})
,A^{2\gamma }\h^{k}) \vert & =\vert (
A^{(2\gamma -1) /2}P(B_{1}\cdot \nabla \h
^{k}) ,A^{(2\gamma +1) /2}\h^{k})
\vert \bigskip \\ 
& \leq C\vert A^{(2\gamma -1) /2}P(B_{1}\cdot
\nabla \h^{k}) \vert \vert A^{(2\gamma +1) /2}%
\h^{k}\vert \bigskip \\ 
& \leq \widetilde{C_{3}}\vert A^{\gamma }B_{1}\vert \vert
A^{(2\gamma +1) /2}\h^{k}\vert ^{2},%
\end{array}%
\]%
\[
\begin{array}{ll}
\vert (P( \h^{k}\cdot \nabla B_{1})
,A^{2\gamma }\h^{k}) \vert & = \vert (
A^{(2\gamma -1) /2}P(\h^{k}\cdot \nabla 
B_{1}) ,A^{(2\gamma +1) /2}\h^{k})
\vert \bigskip \\ 
& \leq C\vert A^{(2\gamma +1) /2}\h^{k}\vert
\vert A^{3\gamma /2}B_{1}\vert \vert A^{(2\gamma
+1) /2}\h^{k}\vert \bigskip \\ 
& \leq \widetilde{C_{4}}\vert A^{3\gamma /2}B_{1}\vert \vert
A^{(2\gamma +1) /2}\h^{k}\vert ^{2},%
\end{array}%
\]%
here we use $\theta =\frac{2\gamma +1}{2}$ and $\rho =\frac{3\gamma }{2}$ in
Giga-Miyakawa estimate,%
\[
\vert (P( \u^{k}\cdot \nabla B_{2})
,A^{2\gamma }\h^{k}) \vert \leq \widetilde{C_{5}}%
\vert A^{(2\gamma +1) /2}\u^{k}\vert
\vert A^{3\gamma /2}B_{2}\vert \vert A^{(2\gamma
+1) /2}\h^{k}\vert, 
\]%
\[
\begin{array}{cc}
\vert (P(B_{2}\cdot \nabla B_{1})
,A^{2\gamma }\h^{k}) \vert & = \vert ( A^{\frac{%
2\gamma -1}{2}}P(B_{2}\cdot \nabla B_{1}) ,A^{(
2\gamma +1) /2}\h^{k}) \vert \bigskip \\ 
& \leq \widetilde{C_{6}}\vert A^{\gamma }B_{2}\vert \vert
A^{(2\gamma +1) /2}B_{1}\vert \vert A^{(2\gamma
+1) /2}\h^{k}\vert,%
\end{array}%
\]%
\[
\vert (P(B_{2}\cdot \nabla \u^{k})
,A^{2\gamma }\h^{k}) \vert \leq \widetilde{C_{7}}%
\vert A^{\gamma }B_{2}\vert \vert A^{(2\gamma
+1) /2}\u^{k}\vert \vert A^{(2\gamma
+1) /2}\h^{k}\vert, 
\]%
\[
\vert (P( B_{1}\cdot \nabla B_{2})
,A^{2\gamma }\h^{k}) \vert \leq \widetilde{C_{8}}%
\vert A^{\gamma }B_{1}\vert \vert A^{(2\gamma
+1) /2}B_{2}\vert \vert A^{(2\gamma +1) /2}%
\h^{k}\vert . 
\]%
Now, summing the above estimates, we get%
\begin{equation}
\begin{array}{l}
\displaystyle\frac{\alpha }{2}\frac{d}{dt}\vert A^{\gamma }\u
^{k}\vert ^{2}+\frac{1}{2}\frac{d}{dt}\vert A^{\gamma }\h
^{k}\vert ^{2}+\nu \vert A^{\frac{1+2\gamma }{2}}\u
^{k}\vert ^{2}+\chi \vert A^{\frac{1+2\gamma }{2}}\h
^{k}\vert ^{2}\bigskip \\ 
\leq zM\vert A^{\frac{1+2\gamma }{2}}\u^{k}\vert
+M\vert A^{(2\gamma +1) /2}\h^{k}\vert
+2C_{1}\vert A^{\gamma }\h^{k}\vert \vert A^{(
2\gamma +1) /2}\h^{k}\vert \vert A^{\frac{2\gamma +1%
}{2}}\u^{k}\vert \bigskip \\ 
+M\vert A^{(2\gamma +1) /2}\h^{k}\vert
\vert A^{\frac{2\gamma +1}{2}}\u^{k}\vert +C_{1}\alpha
\vert A^{\gamma }\u^{k}\vert \vert A^{\frac{2\gamma
+1}{2}}\u^{k}\vert ^{2}+M\vert A^{\frac{2\gamma +1}{2}}%
\u^{k}\vert ^{2}\bigskip \\ 
+\widetilde{C_{9}}\vert A^{\gamma }\u^{k}\vert \vert
A^{(2\gamma +1) /2}\h^{k}\vert ^{2}M\vert
A^{(2\gamma +1) /2}\h^{k}\vert ^{2},%
\end{array}
\label{N3}
\end{equation}%
where we put%
\[
\begin{array}{l}
\alpha C_{2}\vert (B_{1}) _{t}\vert +\nu \overline{%
C_{3}}\vert AB_{1}\vert +\alpha C_{4}\vert A^{2\gamma
}B_{1}\vert \vert A^{{(2\gamma +1)
/2}}B_{1}\vert \bigskip \\ 
+C_{9}\vert A^{\gamma }B_{2}\vert \vert A^{{(2\gamma
+1) /2}}B_{2}\vert =d_{2}\leq M,\bigskip \\ 
\widetilde{C_{1}}\vert (B_{2})_{t}\vert +\widetilde{%
C_{2}}\vert A^{{(2\gamma +1) /2}}B_{2}\vert +%
\widetilde{C_{6}}\vert A^{\gamma }B_{2}\vert \vert
A^{{(2\gamma +1) /2}}B_{1}\vert \bigskip \\ 
+\widetilde{C_{8}}\vert A^{\gamma }B_{1}\vert \vert
A^{{(2\gamma +1) /2}}B_{2}\vert =d_{3}\leq M,%
\end{array}%
\]%
and 
\[
\begin{array}{l}
\\ 
C_{7}\vert A^{\gamma }B_{2}\vert +C_{8}\vert A^{3\gamma
/2}B_{2}\vert +\widetilde{C_{5}}\vert A^{3\gamma
/2}B_{2}\vert +\widetilde{C_{7}}\vert A^{\gamma }B_{2}\vert
=d_{4}\leq M,\bigskip \\ 
C_{5}\vert A^{3\gamma /2}B_{1}\vert +C_{6}\vert A^{\gamma
}B_{1}\vert =d_{5}\leq M,\bigskip \\ 
\widetilde{C_{3}}\vert A^{\gamma }B_{1}\vert +\widetilde{C_{4}}%
\vert A^{3\gamma /2}B_{1}\vert =d_{6}\leq M, \\ 
z=\alpha \widehat{C}+1.%
\end{array}%
\]%
 We should mention, that the constants that appear in right hand side of each estimation by  the Giga-Miyakawa inequalities are proper for the every inequality.
This is why we have so many constants. The presence of a such amount of constants in estimates reflects the difference with the homogeneous case of Ref.\cite{NRR}.

By using the Lemma \ref{lema3},\ with $\theta =0$ and $\beta =1/2$ we follow exactly the estimations done in Ref. \cite{NRR} for the proof of Lemma 2.1 and  obtain
\[
\vert A^{\gamma }\u^{k}(t^{\ast }) \vert
^{2}+\vert A^{\gamma }\h^{k}(t^{\ast }) \vert
^{2}\leq \left( \frac{1}{{\nu}^2}+\frac{1}{{\chi}^2}\right)C^2 \mu
^{2\gamma -3}M=E\mu ^{2\gamma -3}M. 
\]

Let $T^{\ast }=\sup \left\{ T/\left\vert A^{\gamma }\u^{k}\left(
t^{\ast }\right) \right\vert ^{2}+\left\vert A^{\gamma }\h^{k}\left(
t^{\ast }\right) \right\vert ^{2}\leq E\mu ^{2\gamma -3}M,\qquad t\in [
t^{\ast },T)\right\} .$ We will prove by contradiction that $T^{\ast
}=\infty .$ In fact, if $T^{\ast }$ is finite it should follow that $\forall
t\in [t^{\ast },T^{\ast }).$ Again, by following he proof of Lemma 2.1 in  Ref. \cite{NRR} we obtain  
\[
\vert A^{\gamma }\u^{k}(t^{\ast }) \vert
^{2}+\vert A^{\gamma }\h^{k}(t^{\ast }) \vert
^{2}\leq E\mu ^{2\gamma -3}M,\qquad t\in [t^{\ast },T). 
\]%
and 
\[
\vert A^{\gamma }\u^{k}(T^{\ast }) \vert
^{2}+ \vert A^{\gamma }\h^{k}(T^{\ast }) \vert
^{2}=E\mu ^{2\gamma -3}M, 
\]%
where $E= \left( \frac{1}{{\nu}^2} + \frac{1}{{\chi}^2} \right) C^2 .$ Therefore, for such a value $t=T^{\ast },$ we may estimate
\begin{eqnarray*}
zM\vert A^{(1+2\gamma) /2}\u^{k}\vert &\leq
&z \frac{\nu}{C} \mu ^{3/2-\gamma }\vert
A^{\gamma }\u^{k}\vert M^{1/2}\vert A^{(1+2\gamma
) /2}\u^{k}\vert \bigskip \\
&\leq &z  \frac{\nu}{C} \mu ^{1-\gamma
}M^{1/2}\vert A^{(1+2\gamma) /2}\u^{k}\vert
^{2}
\end{eqnarray*}%
where we use the inequality $\vert A^{\gamma }\u
^{k}\vert \leq \mu ^{-1/2}\vert A^{(1+2\gamma) /2}%
\u^{k}\vert .$ Similarly,%
\[
\begin{array}{ll}
d_{3}M\vert A^{(1+2\gamma) /2}\h^{k}\vert & 
\leq d_{3} \frac{\chi}{C} \mu ^{1-\gamma
}M^{1/2}\vert A^{(1+2\gamma) /2}\h^{k}\vert
^{2},\bigskip \\ 
C_{1}\alpha \left\vert A^{\gamma }\u^{k}\right\vert \left\vert
A^{\left( 1+2\gamma \right) /2}\u^{k}\right\vert ^{2} & \leq
C_{1}\alpha  \frac{C}{\nu} \mu ^{\gamma
-3/2}M^{1/2}\vert A^{(1+2\gamma) /2}\u
^{k}\vert ^{2},\bigskip \\ 
d_{5}M\vert A^{(1+2\gamma) /2}\u^{k}\vert
^{2} & \leq d_{5}M^{1/2}\vert A^{(1+2\gamma) /2}\u
^{k}\vert ^{2},\bigskip \\ 
\widetilde{C_{9}}\vert A^{\gamma }\u^{k}\vert \vert
A^{(1+2\gamma) /2}\h^{k}\vert ^{2} & \leq 
\widetilde{C}_{9} \frac{C}{\nu} \mu ^{\gamma
-3/2} M^{1/2}\vert A^{(1+2\gamma) /2}\h
^{k}\vert ^{2},\bigskip \\ 
d_{6}M\vert A^{(1+2\gamma) /2}\h^{k}\vert
^{2} & \leq d_{6}M^{1/2}\vert A^{(1+2\gamma) /2}\h
^{k}\vert ^{2},%
\end{array}%
\]%
and 
\[
d_{4}M\vert A^{(1+2\gamma) /2}\h^{k}\vert
\vert A^{(1+2\gamma) /2}\u^{k}\vert \leq
d_{4}M^{1/2}\overline{C}\left\{ \vert A^{(1+2\gamma) /2}%
\h^{k}\vert ^{2}+ \vert A^{(1+2\gamma) /2}%
\u^{k}\vert ^{2}\right\}, 
\]%
\[
\begin{array}{l}
2C_{1}\vert A^{\gamma }\h^{k}\vert \vert A^{(
1+2\gamma) /2}\h^{k}\vert \vert A^{(
1+2\gamma) /2}\u^{k}\vert \bigskip \\ 
\leq 2C_{1}  \frac{C}{\chi} \mu^{\gamma
-3/2}M^{1/2}\overline{C}\left\{ \vert A^{(1+2\gamma) /2}%
\h^{k}\vert ^{2}+ \vert A^{(1+2\gamma) /2}%
\u^{k}\vert ^{2}\right\} .%
\end{array}%
\]%
Consequently, the above estimate and (\ref{N3}) imply%
\[
\begin{array}{l}
\displaystyle\frac{\alpha }{2}\frac{d}{dt}\vert A^{\gamma }\u
^{k}\vert ^{2}+\frac{1}{2}\frac{d}{dt}\vert A^{\gamma }\h
^{k}\vert ^{2}+\nu \vert A^{\frac{1+2\gamma }{2}}\u
^{k}\vert ^{2}+\chi \vert A^{\frac{1+2\gamma }{2}}\h
^{k}\vert ^{2}\bigskip \\ 
\leq P_{1}M^{1/2}\vert A^{(1+2\gamma) /2}\u
^{k}\vert ^{2}+P_{2}M^{1/2}\vert A^{(1+2\gamma) /2}%
\h^{k}\vert ^{2},%
\end{array}%
\]%
where%
\[
\begin{array}{ll}
P_{1} & =z \frac{\nu}{C} \mu ^{1-\gamma
}+C_{1}\alpha \frac{C}{\nu} \mu ^{\gamma
-3/2}+d_{5}+d_{4}\overline{C}\bigskip \\ 
& +2C_{1} \frac{C}{\chi} \mu ^{\gamma -3/2}%
\overline{C},\bigskip \\ 
P_{2} & =d_{3} \frac{\chi}{C} \mu ^{1-\gamma }+%
\widetilde{C}_{9} \frac{C}{\nu} \mu ^{\gamma
-3/2}+d_{6}+d_{4} \overline{C}\bigskip \\ 
& +2C_{1} \frac{C}{\chi} \mu ^{\gamma -3/2}%
\overline{C},
\end{array}%
\]
Then, if $M<\min \left\{ \left( \frac{\nu}{P_{1}}\right) ^{2},\left( \frac{%
\chi }{P_{2}}\right) ^{2},1\right\} ,$ we have%
\[
\frac{\alpha }{2}\frac{d}{dt}\vert A^{\gamma }\u
^{k}\vert ^{2}+\frac{1}{2}\frac{d}{dt}\vert A^{\gamma }\h
^{k}\vert ^{2}<0,\qquad \mbox{at }t=T^{\ast }. 
\]%
Thus, in a neighborhood of $t=T^{\ast }$ it follows that
\[
\vert A^{\gamma }\u^{k}(t) \vert
^{2}+ \vert A^{\gamma }\h^{k}(t) \vert
^{2}\leq E\mu ^{2\gamma -3}M\qquad \mbox{for any }t\in [ T^{\ast
},T^{\ast }+\delta ). 
\]%
which implies $T^{\ast }=\infty .$ Then, we have%
\begin{eqnarray*}
\vert A^{\gamma }\u^{k}(t) \vert ^{2} &\leq
&E\mu ^{2\gamma -3}M\qquad \mbox{for any }t\in (-\infty ,\infty) \bigskip \\
\vert A^{\gamma }\h^{k}(t) \vert ^{2} &\leq
&E\mu ^{2\gamma -3}M\qquad \mbox{for any }t\in (-\infty ,\infty)
\end{eqnarray*}%
since $\u^{k}(t) $ and $\h^{k}(t)$
are periodical.
\end{proof}

\section{Estimates of the higher order derivatives}

In this section we derive estimates of derivatives of higher order. We need these  estimates  in order to show 
the convergence of the approximate solutions.  According to Lemma \ref{lema7}, for 
sufficiently small $M$ the approximate solutions satisfy 
\begin{equation}
\sup_{t}|A^{\gamma }\u^{k}(t)|\leq C_1(M),\mbox{      }%
\sup_{t}|A^{\gamma }\h^{k}(t)|\leq C_2(M)  \label{d1}
\end{equation}%
with $\gamma =\frac{n}{4}-\frac{1}{2}$, where $C_1(M)$ and $C_2(M)$ are constants
depending on $M$ and on a norm involving the border function $\mbox{\boldmath $\beta$}_i (x,t)$ and independent of $k$.
We may write a lemma, which is similar to Lemma 3.1 of Ref. \cite{NRR},

\begin{lemma}
Let $(\u^{k}(t),\h^{k}(t))$ be the solution of (\ref{m3})
given above. Set 
\[
M_{0}=\left(\int_{0}^{\tau }(|\F(t)|^{2}+ \vert \G(t)\vert^2)dt\right)^{\frac{1}{2}},\mbox{      }%
M_{1}=\left(\int_{0}^{\tau }|(\F_{t}(t)|^{2}+\vert \G_t(t)\vert^2 dt\right)^{\frac{1}{2}}. 
\]
\end{lemma}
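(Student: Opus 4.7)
The plan is to obtain uniform-in-$k$ estimates for the time derivatives $\u^k_t,\h^k_t$ of the Galerkin solutions, in terms of $M_0$, $M_1$, and the norms of the boundary extensions $B_1,B_2$. I would differentiate the finite-dimensional Galerkin system (\ref{m3}) in $t$, which is permitted by the regularity of $\f$ and by hypothesis (A1) on $B_1,B_2$. Testing the differentiated first equation with $\u^k_t$ and the differentiated second equation with $\h^k_t$, then summing, yields an energy identity of the form
\begin{equation*}
\frac{1}{2}\frac{d}{dt}\bigl(\alpha|\u^k_t|^2+|\h^k_t|^2\bigr)+\nu|\nabla\u^k_t|^2+\chi|\nabla\h^k_t|^2=\mathcal{R},
\end{equation*}
where $\mathcal{R}$ collects (i) inertial contributions such as $(\u^k_t\cdot\nabla\u^k,\u^k_t)$, $(\h^k_t\cdot\nabla\h^k,\u^k_t)$ together with their magnetic analogues, (ii) the forcing term $\alpha(\f_t,\u^k_t)$, and (iii) boundary-derivative terms involving $B_{1t},B_{2t},B_{1tt},B_{2tt}$.

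The main tool for bounding the inertial cubic terms is the a priori estimate $\sup_t\bigl(|A^\gamma\u^k(t)|+|A^\gamma\h^k(t)|\bigr)\le C(M)$ supplied by Lemma \ref{lema7}, with $\gamma=\frac{n}{4}-\frac12$. Combined with the Giga--Miyakawa inequality (\ref{b1}) and Lemma \ref{lema3}, each trilinear term can be split so that one factor carries the $L^\infty_t$ bound of Lemma \ref{lema7}, while the remaining factor involving $\u^k_t$ or $\h^k_t$ is absorbed into the dissipation via Young's inequality. The boundary-derivative terms are handled directly by (A1) together with Young's inequality, producing $L^1(0,\tau)$ source terms controlled by $M_0$, $M_1$, and by norms of $B_1,B_2$.

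Next I would integrate the resulting differential inequality over $[0,\tau]$. Since $\u^k,\h^k$ are $\tau$-periodic, so are $\u^k_t,\h^k_t$, so the telescoping boundary term vanishes and I obtain
\begin{equation*}
\int_0^\tau\bigl(\nu|\nabla\u^k_t|^2+\chi|\nabla\h^k_t|^2\bigr)\,dt\le C\bigl(1+M_0^2+M_1^2\bigr).
\end{equation*}
By the Mean Value Theorem for integrals there exists $t^{\ast\ast}\in[0,\tau]$ at which the integrand is controlled pointwise, and Lemma \ref{lema3} with $\theta=0$, $\beta=\frac12$ then gives a pointwise bound on $|\u^k_t(t^{\ast\ast})|$ and $|\h^k_t(t^{\ast\ast})|$. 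Re-integrating the differential inequality forward from $t^{\ast\ast}$ and using $\tau$-periodicity to cover all of $\mathbb{R}$, exactly as in the proofs of (\ref{mk11}) and Lemma \ref{lema7}, yields the sought $\sup_t$ estimates on $|\u^k_t|,|\h^k_t|$. Once these are in hand, the Galerkin identities (\ref{m3}) themselves can be solved for $\nu A\u^k$ and $\chi A\h^k$ and produce the corresponding bounds on $|A\u^k|$ and $|A\h^k|$.

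The hard part will be ensuring that every cubic term of the form $(\varphi\cdot\nabla\psi,\xi)$ in $\mathcal{R}$ with at least one time-differentiated factor is genuinely absorbable into $\nu|\nabla\u^k_t|^2+\chi|\nabla\h^k_t|^2$. This requires a careful interpolation choice so that the $A^\gamma$ bound from Lemma \ref{lema7} lands on the factor without a time derivative, while the factor destined for absorption carries an $A^{(1+2\gamma)/2}$ norm, exactly mirroring the Giga--Miyakawa splitting used in the proof of Lemma \ref{lema7}. As in that argument, the scheme will close only if $M$ is taken small enough that the Young coefficients leave the dissipation with a strictly positive net coefficient, which is consistent with the smallness hypothesis already imposed in Lemma \ref{lema7}.
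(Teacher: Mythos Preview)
Your route is valid but it is not the paper's. The paper obtains the gradient bound \emph{first}, by testing the original Galerkin system \eqref{m3} with $A\u^{k}$ and $A\h^{k}$. This gives
\[
\frac{d}{dt}\bigl(\alpha|\nabla\u^{k}|^{2}+|\nabla\h^{k}|^{2}\bigr)+2\nu|A\u^{k}|^{2}+2\chi|A\h^{k}|^{2}\le (\text{controlled terms}),
\]
and the integrate/MVT/re-integrate scheme you describe is then applied to this inequality, yielding $\sup_t|\nabla\u^{k}|$ and $\sup_t|\nabla\h^{k}|$ together with the $L^{2}(0,\tau)$ bound on $|A\u^{k}|$ and $|A\h^{k}|$. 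Only afterwards does the paper test \eqref{m3} (still undifferentiated) with $\u^{k}_{t}$ and $\h^{k}_{t}$ to reach the time-derivative bound. You reverse this: you differentiate \eqref{m3} in $t$, run the energy argument on $\u^{k}_{t},\h^{k}_{t}$ first, and then extract the spatial estimate by solving the Galerkin identity for $A\u^{k}$.

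Both schemes close, and in fact your final step overshoots the target of this lemma and already delivers $\sup_t|A\u^{k}|$, which the paper postpones to the next lemma. The price you pay is twofold. First, your differentiated system produces source terms in $(B_{1})_{tt},(B_{2})_{tt}$, which hypothesis (A1) as stated ($\beta_i\in C^{1}_{\pi}$) does not guarantee; the paper's direct test with $A\u^{k}$ avoids any second time derivative of the boundary extensions at this stage. Second, your recovery of $|A\u^{k}|$ from the equation requires absorbing a term of size $C|A^{\gamma}\u^{k}|\,|A\u^{k}|$ into the left side, so it hinges on the same smallness of $M$ already used in Lemma~\ref{lema7}; the paper's argument for $\sup_t|\nabla\u^{k}|$ needs smallness too (to make $d=\min\{\nu-N_1(M),\chi-N_2(M)\}>0$), so this is not a new restriction, just worth noting explicitly.
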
 

Then, we have 
\[
\sup_{0\leq t\leq \tau }|\nabla \u^{k}(t)|^{2}\leq
C(M_{0},M),\mbox{
}\sup_{0\leq t\leq \tau }|\nabla \h^{k}(t)|^{2}\leq
C(M_{0},M), 
\]%
and 
\[
\sup_{t}(\alpha |\u^{k}_t(t)|^{2}+|\h_{t}^{k}(t)|^{2})\leq
C(M_{0,}M_{1},M), 
\]%
where $C(M_{0},M)$ and $C(M_{0},M_{1},M)$ denote constants depending on $M_{0},M_{1}$ are independent of $k.$

\textbf{Proof. }  We repeat here the trick with test functions used by us in the proof of Lemma 1.   

Taking $A\u^{k}$ and $A\h^{k}$ as test
functions in (\ref{m3}), we get%

\begin{eqnarray*}
\left( \alpha \u_{t}^{k}+\nu A\u^{k},A\u^{k}\right) &=& ( \F -\alpha P( \u^{k}\cdot\nabla \u^{k}) + P( \h^{k}\cdot \nabla \h^{k}),A\u^{k})  \\ 
&&+( L_1(\u^k) ,A\u^{k}) + ( L_2(\h^k) ,A\u^{k}), \bigskip \\ 
(\h_{t}^{k}+\chi A\h^{k},A\h^{k})  &=&
( \G- P(\u^{k}\cdot \nabla \h^{k}) 
+ P(\h^{k}\cdot \nabla \u^{k}) ,A\h^{k})\\
&&+( L_3(\h^k) ,A\h^{k}) + ( L_4(\u^k) ,A\h^{k}),
\end{eqnarray*}%

Then, we follow the same lines that we did in the proof of Lemma 3.1 of Ref. \cite{NRR}, recalling the estimate (\ref{d1}) are sufficiently small  (if $M$ is small) and by hypotheses $\vert AB_i\vert$ and $\vert A^\gamma B_i\vert$ ($i=1,2$) also are sufficiently small we can obtain the following inequality

\begin{eqnarray}\label{d3}
\frac{d}{dt} \left(\alpha \vert \u^k\vert^2 + \vert \nabla\h^k\vert^2 \right) + 2\nu \vert A\u^k\vert^2 + 2 \chi\vert A\h^k\vert^2 \leq C
\end{eqnarray}
where the constant $C>0$ depends on $\partial \Omega$, $B_i, i=1,2$, $M$, $\f$.

 Integrating (\ref{d3}) and recalling the periodicity of $\nabla \u^{k}(t)$ and $\nabla \h^{k}(t) ,$ we have%
\begin{eqnarray*}
\displaystyle\int_{0}^{\tau }(2\nu\vert A\u^{k}\vert
^{2}+2\chi \vert A\h^{k}\vert ^{2}) dt \leq D_1
\end{eqnarray*}
where $D_1\geq C\tau$.
%or
%\begin{equation}
%\begin{array}{l}
%\displaystyle\int\nolimits_{0}^{\tau }((\nu-N_{1}(M))
%\vert A\u^{k}\vert ^{2}+ (\chi -N_{2}(M)) \vert A\h^{k}\vert ^{2}) dt\bigskip \\ 
%\displaystyle\leq \widehat{M}_{1}(M,J_{1B}) \left( \int\nolimits_{0}^{\tau }\vert
%A\u^{k}\vert ^{2}\right) ^{1/2}+\widehat{M}_{2}(J_{2B})
%\left( \int\nolimits_{0}^{\tau }\vert A\h^{k}\vert
%^{2}\right) ^{1/2}.%
%\end{array}
%\label{d4}
%\end{equation}
%Then, if $d=\min \left\{ \left(\nu-N_{1}\left( M\right) \right) ,\left( \chi
%-N_{2}\left( M\right) \right) \right\} >0,$ from (\ref{d4}) we have,%
%\[
%\begin{array}{l}
%\displaystyle\int\nolimits_{0}^{\tau }(\vert A\u^{k}\vert
%^{2}+\vert A\h^{k}\vert ^{2}) dt\medskip \\ 
%\displaystyle\leq d^{-1}\widehat{M}_{1}(M,J_{1B}) \left( \int\nolimits_{0}^{\tau
%}\vert A\u^{k}\vert ^{2} \right) ^{1/2}+d^{-1}\widehat{M}_{2}(
%J_{2B}) (\int\nolimits_{0}^{\tau }\vert A\h
%^{k}\vert ^{2}) ^{1/2}\medskip \\ 
%\displaystyle\leq \frac{( d^{-1}\widehat{M}_{1}( M,J_{1B})) ^{2}}{2}+\frac{%
%1}{2}\int\nolimits_{0}^{\tau }\vert A\u^{k}\vert ^{2}+%
%\frac{( d^{-1}\widehat{M}_{2}(J_{2B})) ^{2}}{2}+\frac{1}{2}%
%\int\nolimits_{0}^{\tau }\vert A\h^{k}\vert ^{2},%
%\end{array}%
%\]%
%where we have used the Young's inequality, then%
%\[
%\begin{array}{ll}
%\displaystyle\int\nolimits_{0}^{\tau }(\vert A\u^{k}\vert
%^{2}+ \vert A\h^{k}\vert ^{2}) dt & \leq (
%d^{-1}\widehat{M}_{1}(M,J_{1B})) ^{2}+ (d^{-1}\widehat{M}_{2}(
%J_{2B})) ^{2}\medskip \\ 
%& \leq D(M,J_{1B},J_{2B}).
%\end{array}%
%\]

Finally, applying the Mean Value Theorem for integrals, we have that there
exists $t^{\ast }\in [ 0,\tau ]$ such that 
\[
|A\u^{k}(t^{\ast })|^{2}+|A\h^{k}(t^{\ast })|^{2}\leq \tau
^{-1}D. 
\]%
By using the Lemma \ref{lema3} , with $\theta =\frac{1}{2}$ , $\beta =1,$ we have 
\[
|\nabla \u^{k}(t^{\ast })|^{2}\leq \mu ^{-1}|A\u^{k}(t^{\ast
})|^{2}\leq \mu ^{-1}\tau ^{-1}D
\]%
and 
\[
|\nabla \h^{k}(t^{\ast })|^{2}\leq \mu ^{-1}|A\h^{k}(t^{\ast
})|^{2}\leq \mu ^{-1}\tau ^{-1}D. 
\]%
Now, integrating inequality (\ref{d3}) from $t^{\ast }$ to $t+\tau $ $%
(t\in [0,\tau])$, we deduce easily 
\begin{equation}
\sup_{t}|\nabla \u^{k}(t)|\leq C(M_0, M) ,\quad
\sup_{t}|\nabla \h^{k}(t)|\leq C(M_0,M)
\label{d2a}
\end{equation}%
where $C(M_0,M)$ is independent of $k.$

Similarly, taking $\u_{t}^{k}$ and $\h_{t}^{k}$ as test
functions in (\ref{m3}), we can show that%
\[
\sup_{t}|\u_{t}^{k}(t)|\leq C(M_{0},M_1, M)
,\quad \sup_{t}|\h_{t}^{k}(t)|\leq D(M_{0},M_1,M). 
\]
This completes the proof of lemma.

The proof of the following lemma is omitted, since it is similar to the
proofs of the previous lemmas and one can follow the methodology of Lemma
3.2 of \cite{NRR}.

\begin{lemma}
Let $(\u^{k}(t),\h^{k}(t))$ be the approximate solution of (%
\ref{m3}) given above. Then, we have 
\[
\sup_{t}|A\u^{k}(t)|\leq C(M_{0},M_{1},M),\quad
\sup_{t}|A\h^{k}(t)|\leq C(M_{0},M_{1},M) 
\]

\[
\int_{0}^{\tau }(|A\u_{t}^{k}(t)|^{2}+|A\h
_{t}^{k}(t)|^{2})dt\leq C(M_{0,}M_{1},M), 
\]%
\[
\int_{0}^{\tau }(|\u_{tt}^{k}(t)|^{2}+|\h
_{tt}^{k}(t)|^{2})dt\leq C(M_{0,}M_{1},M).
\]
\end{lemma}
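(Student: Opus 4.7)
The plan is to derive these higher-order bounds in three steps, freely using the already-established pointwise bounds on $|A^{\gamma}\u^{k}|, |A^{\gamma}\h^{k}|$ (Lemma \ref{lema7}) and $|\nabla \u^{k}|, |\nabla \h^{k}|, |\u_{t}^{k}|, |\h_{t}^{k}|$ (the previous lemma). The underlying tool throughout will be the Giga--Miyakawa estimate (\ref{b1}) with the admissible choice $\delta=0,\ \theta=\gamma,\ \rho=1$, giving
\[
|P(\u\cdot\nabla\v)|\ \leq\ C|A^{\gamma}\u|\,|A\v|.
\]

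First I would obtain the pointwise bounds on $|A\u^{k}|, |A\h^{k}|$ by treating (\ref{m3}) algebraically: solve for $\nu A\u^{k}$ and $\chi A\h^{k}$ as sums of $\u_{t}^{k}, \h_{t}^{k}$, the forcing $\f$, the nonlinear terms, and boundary source terms involving $B_{1}, B_{2}, (B_{i})_{t}, AB_{i}, P(B_{i}\cdot\nabla B_{j})$, etc. Taking the $L^{2}$-norm and applying the Giga--Miyakawa bound above to each of $P(\u^{k}\cdot\nabla \u^{k}), P(\h^{k}\cdot\nabla \h^{k}), P(\u^{k}\cdot\nabla B_{1}), P(B_{1}\cdot\nabla\u^{k}), P(\h^{k}\cdot\nabla B_{2}), P(B_{2}\cdot\nabla\h^{k})$, the nonlinear self-terms come out with a prefactor $C|A^{\gamma}\u^{k}|$ or $C|A^{\gamma}\h^{k}|$ that is small by Lemma \ref{lema7} (since $M$ is small). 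These can therefore be absorbed into the left-hand side, while the cross/boundary terms are controlled by the constants $L_{1B},L_{2B}$ and by $\sup_{t}|\u_{t}^{k}|,\sup_{t}|\h_{t}^{k}|\leq C(M_{0},M,L_{1B},L_{2B})$.

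Next, to get the time-integrated bounds on $A\u_{t}^{k}, A\h_{t}^{k}$, I would differentiate (\ref{m3}) in $t$, producing a linear system for $(\u_{t}^{k},\h_{t}^{k})$ with source $\alpha P\f_{t}$ plus time-derivatives of the boundary source and the new bilinear cross-terms $\u_{t}^{k}\cdot\nabla\u^{k}+\u^{k}\cdot\nabla\u_{t}^{k}$ and $\h_{t}^{k}\cdot\nabla\h^{k}+\h^{k}\cdot\nabla\h_{t}^{k}$ (plus the analogous mixed magnetic--velocity couplings). Testing with $A\u_{t}^{k}$ and $A\h_{t}^{k}$, summing and using Giga--Miyakawa, we arrive at an inequality of the form
\[
\tfrac{1}{2}\tfrac{d}{dt}\bigl(\alpha|\nabla\u_{t}^{k}|^{2}+|\nabla\h_{t}^{k}|^{2}\bigr)+\nu|A\u_{t}^{k}|^{2}+\chi|A\h_{t}^{k}|^{2}\ \leq\ \Phi(t),
\]
where $\Phi$ depends on $|\f_{t}|^{2}, |B_{itt}|^{2}, |AB_{it}|^{2}$ and on the quadratic cross-terms, each of which is controlled by the bounds from Step 1 together with $\sup_{t}|\u_{t}^{k}|,\sup_{t}|\h_{t}^{k}|$. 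Integrating over $[0,\tau]$ and invoking $\tau$-periodicity kills the $d/dt$ term, yielding the integrated $\int_{0}^{\tau}(|A\u_{t}^{k}|^{2}+|A\h_{t}^{k}|^{2})\,dt$ bound in terms of $M_{0},M_{1},M,L_{1B},L_{2B}$. Finally, solving the time-differentiated equation for $\alpha\u_{tt}^{k}$ (respectively $\h_{tt}^{k}$), taking $L^{2}$ norm, squaring and integrating in $t$, together with the just-obtained bound on $A\u_{t}^{k}, A\h_{t}^{k}$ and the Giga--Miyakawa estimates on the nonlinear terms, gives the $\int_{0}^{\tau}(|\u_{tt}^{k}|^{2}+|\h_{tt}^{k}|^{2})\,dt$ bound.

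The main obstacle will be Step 2: controlling the new bilinear terms $\u_{t}^{k}\cdot\nabla\u^{k}+\u^{k}\cdot\nabla\u_{t}^{k}$ (and the magnetic analogues) that appear only after time-differentiation. The trick is to split their regularity burden asymmetrically via Giga--Miyakawa so that one factor carries an $A^{\gamma}$-norm (which is small by Lemma \ref{lema7} and therefore absorbable) while the other factor carries at most an $A^{1/2}$-norm that is already controlled in $L^{\infty}(0,\tau)$. Provided $M$ is kept below the smallness threshold used in Lemma \ref{lema7}, all such nonlinear contributions can be absorbed into the viscous term on the left-hand side, and the remaining source terms reduce cleanly to expressions depending only on $M_{0}, M_{1}, M, L_{1B}, L_{2B}$ and independent of $k$.
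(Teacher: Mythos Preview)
The paper itself omits this proof, saying only that it follows the pattern of the preceding lemma together with Lemma~3.2 of \cite{NRR}. Your three-step plan is in that spirit, and Steps~1 and~3 are sound; the algebraic shortcut in Step~1 (reading off $|A\u^{k}|$, $|A\h^{k}|$ directly from the projected equations once $\sup_t|\u_t^k|$, $\sup_t|\h_t^k|$ are available) is a legitimate and slightly quicker alternative to the paper's test--integrate--mean-value template.

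There is, however, a real gap in Step~2. Your asymmetric Giga--Miyakawa splitting handles $(P(\u^{k}\cdot\nabla\u_{t}^{k}),A\u_{t}^{k})$ and its analogues: the undifferentiated factor carries $A^{\gamma}$, its norm is small by Lemma~\ref{lema7}, and the resulting $C|A^{\gamma}\u^{k}|\,|A\u_{t}^{k}|^{2}$ absorbs into $\nu|A\u_{t}^{k}|^{2}$. But it fails for $(P(\u_{t}^{k}\cdot\nabla\u^{k}),A\u_{t}^{k})$ and the other terms where the time-derivative sits in the first slot. There the gradient falls on $\u^{k}$, and the Giga--Miyakawa constraint $\rho+\delta>\tfrac12$ (with $\delta=0$) forces $\rho>\tfrac12>\gamma$ on that factor, so you cannot put the small $A^{\gamma}$-norm on $\u^{k}$. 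The best estimate is $|P(\u_{t}^{k}\cdot\nabla\u^{k})|\le C\,|A^{\gamma}\u_{t}^{k}|\,|A\u^{k}|$, and after Young's inequality you are left with $C\,|A\u^{k}|^{2}\,|\nabla\u_{t}^{k}|^{2}$ on the right-hand side, whose coefficient is bounded (by Step~1) but not small, and which is not controlled by $\sup_t|\u_{t}^{k}|$ alone.

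The remedy --- which is exactly what the paper's template does one level down --- is an intermediate pass: first test the time-differentiated system with $\u_{t}^{k}$ and $\h_{t}^{k}$ (not yet with $A\u_{t}^{k}$, $A\h_{t}^{k}$). Integrating over a period kills the $d/dt$ term and yields $\int_{0}^{\tau}(|\nabla\u_{t}^{k}|^{2}+|\nabla\h_{t}^{k}|^{2})\,dt\le C(M_0,M_1,M,L_{1B},L_{2B})$. With this bound available, your Step~2 inequality integrates cleanly and the remainder of the argument goes through as you describe.
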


\section{Proof of Theorem 5 and Theorem 6}

In this section we partially use a similar strategy to prove uniqueness and existence theorems that was applied in Ref. \cite{NRR} to the case of homogeneous boundary condition.
First, we prove Theorem 5.  By the Aubin-Lions theorem, it follows from  estimates (\ref{mk4/5}) that there are subsequences $\u^{k}(t)$ and $\h^{k}(t)$ such that

\[
\u^{k}\rightarrow \u\mbox{, }\h^{k}\rightarrow \h
\mbox{, strongly in  }L^{\infty }(\tau ;\V). 
\]

We may write by using Lemma 12 
\begin{eqnarray*}
\u^{k}\rightarrow \u \mbox{, }\h^{k}\rightarrow \h
\mbox{, }w^{\ast }\mbox{ in  }L^{\infty }(\tau ;D(A)), \\ 
\u_{t}^{k}\rightarrow \u_{t}\mbox{, }\h
_{t}^{k}\rightarrow \h_{t}\mbox{, }w^{\ast }\mbox{ in
}L^{\infty }(\tau ;\V),  
\end{eqnarray*}
in which the functions $\u(t)$ and $\h(t)$ satisfy 
\[
\u,\h\in H^{2}(\tau ;\H)\cap H
^{1}(\tau ;D(A))\cap L^{\infty }(\tau ;D(A))\cap W^{1,\infty }(\tau ;\V). 
\]
Our aim is to show that
\[
\u_{t}^{k}\rightarrow \u_{t}\mbox{, }\h
_{t}^{k}\rightarrow \h_{t}\mbox{,
strongly in  }L^{\infty }(\tau ;\H). 
\]

We may take $\phi =\u_{t}$ and $\phi =\h_{t}$ in Lemma $4$, with $X=\V,Y=B=\H.$  In such way we establish the desired convergences. 
After the establishing of these convergences, we take the limit along the previous subsequences in (\ref{m3}), 
and we  conclude that $(\u,\h)$ is a periodic strong solution of (\ref{Ch1})-(\ref{Ch2}). 
This proves Theorem 5 dedicated to existence of periodic solution.

To prove Theorem 6 dedicated to the uniqueness, we consider that $(\u_{1},\h_{1})$
and $(\u_{2},\h_{2})$ are two solutions of problem (\ref{Ch1}%
)- (\ref{Ch2}). By defining the differences

\[
\w=\u_{1}-\u_{2}\mbox{, }\z=\h_{1}-\h_{2},
\]

we have from (\ref{arica1})   
 \begin{eqnarray} \label{LS1} 
\displaystyle\alpha \frac{ d\w}{dt }+\nu A\w &=& -\alpha P\w\cdot \nabla \u
_{1} - \alpha P\u_{2}\cdot \nabla \w+P\z\cdot \nabla \h
_{1}  +P\h_{2}\cdot \nabla \z -  L_1(\w)- L_2(\z),  \nonumber \\
\displaystyle\frac{d \z}{dt}+\chi A\z &=&-P\w\cdot \nabla \h_{1} -
P\u_{2}\cdot \nabla \z+P\z\cdot \nabla \u_{1} 
+P\h_{2}\cdot \nabla \w- L_3(\z)- L_4(\w) ,%  
\end{eqnarray}  

Then, by multiplying the first equation of (\ref{LS1}) (respectively the second equation of (\ref{LS1})) by $\w$ (respectively by $\z$) and 
integrating on $\Omega$, we obtain repeating mainly the approach used in Section 5 of Ref.\cite{NRR} 

\[
\begin{array}{l}
\displaystyle\frac{1}{2}\frac{d}{dt}(\alpha |\w|^{2}+|\z|^{2})+\nu |\nabla \w|^{2}+\chi
|\nabla \z|^{2}\medskip \\ 
=\alpha (P\w\cdot \nabla \w,\u_{1}) - (P\z\cdot \nabla
\w,\h_{1}) + (P\w\cdot \nabla \w,B_{1}) - (
P\z\cdot \nabla \w,B_{2}) \medskip \\ 
+ (P\w\cdot \nabla \z,\h_{1}) - (P\z\cdot \nabla \z,%
\u_{1}) - (P\z\cdot \nabla \z,B_{1}) + (P\w\cdot
\nabla \z,B_{2}) .%
\end{array}%
\]
Now, by Giga-Miyakawa $(\vert A^{-\delta }P\u\cdot \nabla
\v\vert \leq C_{1}\vert A^{\theta }\u\vert
\vert A^{\rho }\v\vert) $\ with $\delta =\gamma $ and $%
\theta =\rho =1/2,$ we have, repeating the approach used in Section 5 of Ref.\cite{NRR}

\begin{eqnarray*}
\vert \alpha (P\w\cdot \nabla \w,\u_{1}) \vert \leq C_{1}\vert \nabla \w\vert ^{2}\vert A^{\gamma } \u_{1}\vert \leq C_{1}C(M) \vert \nabla \w\vert^{2}, \\ 
\vert (P\z\cdot \nabla \w,\h_{1}) \vert \leq  C_{1}C(M) \vert \nabla \z\vert \vert\nabla \w\vert \leq \frac{C_{1}C(M) }{2}\vert \nabla\z\vert ^{2}+\frac{C_{1}C(M) }{2}\vert \nabla \w\vert ^{2},
\end{eqnarray*}

Similarly, we may evaluate $\left\vert (P\w\cdot \nabla \w,B_{1}) \right\vert,$ $\left\vert (P\w\cdot \nabla \w,B_{2}) \right\vert,$ 
$\left\vert (P\z\cdot \nabla \w,B_{2}) \right\vert,$  $\left\vert (P\w\cdot \nabla \z,\h_{1}) \right\vert,$  $\left\vert \left( P\z\cdot \nabla \z,\u_{1}\right) \right\vert,$  
$\left\vert \left( P\z\cdot \nabla \z,B_{1}\right) \right\vert,$  $\left\vert \left( P\w\cdot \nabla \z,B_{2}\right) \right\vert.$  

Then, by using the estimates above we have%
\[
\frac{1}{2}\frac{d}{dt}(\alpha |\w|^{2}+|\z|^{2})+\nu |\nabla \w|^{2}+\chi
|\nabla \z|^{2}\leq D(M)( \nu |\nabla \w|^{2}+\chi |\nabla
\z|^{2}), 
\]%
where $D(M) $ is an appropriate constant depending on $M$, such that $D(M) \rightarrow 0$ when $M\rightarrow 0$.
 Now, we can write%
\[
\frac{d}{dt}(\alpha |\w|^{2}+|\z|^{2})\leq 2 (D(M) -1)
(\nu |\nabla \w|^{2}+\chi |\nabla \z|^{2}). 
\]%
Thus, considering that $D\left( M\right) <1,$ we conclude that $L=2(
1-D(M)) >0,$ and then, from the above inequality, we have%
\begin{equation}
\frac{d}{dt}(\alpha |\w|^{2}+|\z|^{2})\leq -L(\nu |\nabla \w|^{2}+\chi
|\nabla \z|^{2}) .  \label{e1}
\end{equation}%

On the other hand, recall that we can choose the basis $\left\{
\mbox{$\w$}_{i};i=1,2,...\right\} $ such that the eigenfunctions $\mbox{$\w$}_{i}$ of $A$ are
also eigenfunctions of $A^{\gamma }$ and that we can write%
\[
A\mbox{$\w$}_{i}=\mu _{i}\mbox{$\w$}_{i},\qquad A^{\gamma }\mbox{$\w$}_{i}=\mu _{i}^{\gamma }\mbox{$\w$}_{i} 
\]%
where the $\mu _{i}$ are eigenvalue of $A.$ We obtain that%
\[
|\nabla \w|\leq \mu ^{1/2}\left\vert \w\right\vert \qquad \mbox{and\qquad }%
|\nabla \z|\leq \mu ^{1/2}\left\vert \z\right\vert , 
\]%
then from (\ref{e1}) we can write%
\begin{eqnarray*}
\frac{d}{dt}(\alpha |\w|^{2}+|\z|^{2}) &\leq &-L( \nu \mu |\w|^{2}+\chi
\mu |\z|^{2}) \\
&\leq &-Q(\alpha |\w|^{2}+|\z|^{2}),
\end{eqnarray*}%
where $Q=L\mu \min \left\{ \nu ,\chi \right\} \left( \frac{1}{\alpha }%
+1\right) >0.$

Finally, 
\[
(\alpha |\w(t) |^{2}+|\z(t) |^{2}\leq (\alpha
|\w(0)|^{2}+|\z(0)|^{2}) e^{-Qt}, 
\]%
for any $t\in (0,\infty ).$

Since $\w(t)$ and $\z(t)$ are periodic in $t$, for any $t\in (-\infty ,+\infty
)$ there exists a positive integer $n_{0}$ such that $t+n_{0}\tau >0$ and 
\[
\alpha |\w(t)|^{2}+|\z(t)|^{2}=\alpha |\w(t+n_{0}\tau )|^{2}+|\z(t+n_{0}\tau
)|^{2}. 
\]

Hence, it follows, 
\[
\alpha |\w(t)|^{2}+|\z(t)|^{2}\leq (\alpha |\w(0)|^{2}+|\z(0)|^{2}) e^{-Qnt} 
\]%
$(n\geq n_{0}),$ which implies 
\[
\alpha |\w(t)|^{2}+|\z(t)|^{2}=0 
\]%
and finally $\u_{1}=\u_{2}$ and $\h_{1}=\h
_{2}.$   Thus, Theorem 6 is proven.

\section{Asymptotic stability}

In this section we prove the theorem of  stability, for the two-dimensional case, by using the method of  Ref. \cite{Hsia} 
and comment on the proof for three-dimensional case.

\begin{proof}[(Proof of the Theorem 7]  Let $\left\{(\u_{2}(t) ,\h_{2}
(t)) \right\} _{t\geq 0}$ is a strong solution of the system (\ref{Ch1})-(\ref{Ch2}) 
with inhomogeneous conditions $\left( \u_{0},\h_{0}\right)$  which satisfies (\ref{S2}), 
and suppose  $\left\{ \left( \u_{1}\left( t\right),\h_{1}\left(t\right) \right) \right\} _{t\geq 0}$   is another strong solution. 
Let $\w=\u_{1}-\u_{2}$ and $\z=\h_{1}-\h_{2}$
then by substituting in the system \eqref{arica1}, we have%
\begin{equation}
\begin{array}{l}
\alpha \displaystyle{\frac{ d\w}{dt}}+\nu A\w+\alpha P%
\w\cdot \nabla \u_{1}+\alpha P\u_{2}\cdot \nabla 
\w-P\z\cdot \nabla \h_{1}-P\h_{2}\cdot
\nabla \z \\ 
\\ 
+P\w\cdot \nabla B_{1}+PB_{1}\cdot \nabla \w-PB_{2}\cdot
\nabla \z-P\z\cdot \nabla B_{2}=0,%
\end{array}
\label{S5}
\end{equation}

\begin{equation}
\begin{array}{l}
\displaystyle{\frac{d\z}{dt}} + \chi A\z+P\w
\cdot \nabla \h_{1}+P\u_{2}\cdot \nabla \z-P\z%
\cdot \nabla \u_{1}-P\h_{2}\cdot \nabla \w \\ 
\\ 
-P\z\cdot \nabla B_{1}+PB_{1}\cdot \nabla \z-PB_{2}\cdot
\nabla \w+P\w\cdot \nabla B_{2}=0.
\end{array}
\label{S6}
\end{equation}%
Now, taking the $L^{2}(\Omega)$ inner product of (\ref{S5})
with $A\w$, and observing that%
\[
\begin{array}{l}
\alpha \left( \w\cdot \nabla \u_{1},A\w\right)
=\alpha \left( \w\cdot \nabla \w,A\w\right) +\alpha
\left( \w\cdot \nabla \u_{2},A\w\right), \\ 
\\ 
\left( \z\cdot \nabla \h_{1},A\w\right) =\left( 
\z\cdot \nabla \z A\w\right) +\left( \z\cdot
\nabla \h_{2},A\w\right)%
\end{array}%
\]%
we have%
\begin{equation}
\begin{array}{lll}
\displaystyle{\frac{\alpha }{2}\frac{d}{dt}}\left\vert \nabla \w\right\vert
^{2}+\nu \left\vert A\w\right\vert ^{2} & = & -\alpha \left( \w\cdot \nabla \w,A\w\right) -\alpha \left( \w\cdot
\nabla \u_{2},A\w\right) \bigskip \\ 
&  & -\alpha \left( \u_{2}\cdot \nabla \w,A\w\right)
+\left( \z\cdot \nabla \z,A\w\right) \bigskip \\ 
&  & +\left( \z\cdot \nabla \h_{2},A\w\right)
+\left( \h_{2}\cdot \nabla \z,A\w\right) \bigskip \\ 
&  & -\left( \w\cdot \nabla B_{1},A\w\right) -\left(
B_{1}\cdot \nabla \w,A\w\right) \bigskip \\ 
&  & +\left( B_{2}\cdot \nabla \z,A\w\right) +\left( \z\cdot \nabla B_{2},A\w\right) .%
\end{array}
\label{S7}
\end{equation}

In the same way,\ taking the $L^{2}(\Omega) $ inner product of (%
\ref{S6}) with $A\z$, and observing that%
\[
\begin{array}{l}
(\w\cdot \nabla \h_{1},A\z) =(\w\cdot \nabla \z,A\z) +(\w\cdot
\nabla \h_{2},A\z) \\ 
\\ 
(\z\cdot \nabla \u_{1},A\z) =( 
\z\cdot \nabla \w,A\z) +(\z\cdot
\nabla \u_{2},A\z)%
\end{array}%
\]%
we have%
\begin{equation}
\begin{array}{lll}
\displaystyle{\frac{1}{2}\frac{d}{dt}}\left\vert \nabla \z\right\vert
^{2}+\chi \left\vert A\z\right\vert ^{2} & = & -(\w%
\cdot \nabla \z,A\z) -(\w\cdot \nabla 
\h_{2},A\z) \bigskip \\ 
&  & -(\u_{2}\cdot \nabla \z,A\z)
+(\z\cdot \nabla \w,A\z) \bigskip \\ 
&  & +(\z\cdot \nabla \u_{2},A\z)
+(\h_{2}\cdot \nabla \w,A\z) \bigskip \\ 
&  & +(\z\cdot \nabla B_{1},A\z) -(
B_{1}\cdot \nabla \z,A\z) \bigskip \\ 
&  & +( B_{2}\cdot \nabla \w,A\z) -(\w
\cdot \nabla B_{2},A\z) .%
\end{array}
\label{S8}
\end{equation}

Now, we must limit each term on the right side of equalities (\ref{S7}), 
\begin{eqnarray*}
\left\vert -\alpha (\w\cdot \nabla \w,A\w) \right\vert &\leq &\alpha \left\vert \w\right\vert
_{L^{4}}\left\vert \nabla \w\right\vert _{L^{4}}\left\vert A\w%
\right\vert \leq \alpha C_{\varepsilon }\left\vert \w\right\vert
_{L^{4}}^{2}\left\vert \nabla \w\right\vert _{L^{4}}^{2}+\alpha
\varepsilon \left\vert A\w\right\vert ^{2}\bigskip \\
&\leq &\alpha CC_{\varepsilon \varepsilon }\left\vert \w\right\vert
\left\vert \nabla \w\right\vert \left\vert \nabla \w%
\right\vert \left\vert A\w\right\vert +\alpha \varepsilon \left\vert
A\w\right\vert ^{2}\bigskip \\
&\leq &\alpha CC_{\varepsilon \delta }\left\vert \w\right\vert
^{2}\left\vert \nabla \w\right\vert ^{4}+\alpha C_{\varepsilon
}\delta \left\vert A\w\right\vert ^{2}+\alpha \varepsilon \left\vert
A\w\right\vert ^{2}\bigskip \\
&\leq &\alpha CC_{\varepsilon \delta }\left\vert \w\right\vert
^{2}\left\vert \nabla \w\right\vert ^{4}+\frac{\nu }{44}\left\vert A%
\w\right\vert ^{2}, ~~~ \left( \alpha C_{\varepsilon }\delta
+\alpha \varepsilon <\frac{\nu }{44}\right),
\end{eqnarray*}%
where we have used the fact $\u_{2}$ is a strong solution of the system (1)-(3), 
\begin{eqnarray*}
\left\vert -\alpha (\w\cdot \nabla \u_{2},A\w) \right\vert &\leq &\alpha C\left\vert \w\right\vert
_{H^{2}}\left\vert \nabla \u_{2}\right\vert \left\vert A\w%
\right\vert \bigskip \\
&\leq &\alpha C\left\vert \nabla \u_{2}\right\vert \left\vert A%
\w\right\vert ^{2}\leq C\left( \gamma _{1},\gamma _{2}\right)
\left\vert A\w\right\vert ^{2},
\end{eqnarray*}%
\begin{eqnarray*}
\left\vert -\alpha (\u_{2}\cdot \nabla \w,A\w) \right\vert &\leq &\alpha \left\vert \u_{2}\right\vert
_{L^{4}}\left\vert \nabla \w\right\vert _{L^{4}}\left\vert A\w%
\right\vert \leq \alpha C\left\vert \u_{2}\right\vert
^{1/2}\left\vert \nabla \u_{2}\right\vert ^{1/2}\left\vert \nabla 
\w\right\vert ^{1/2}\left\vert A\w\right\vert ^{3/2}\bigskip
\\
&\leq &\alpha CC_{\varepsilon }\left\vert \u_{2}\right\vert
^{2}\left\vert \nabla \u_{2}\right\vert ^{2}\left\vert \nabla 
\w\right\vert ^{2}+\frac{\nu }{44}\left\vert A\w\right\vert
^{2}, ~~~ \left( \alpha C\varepsilon <\frac{\nu }{44}\right),
\end{eqnarray*}%
\begin{eqnarray*}
\left\vert \left( \z\cdot \nabla \z,A\w\right)
\right\vert &\leq &\left\vert \z\right\vert _{L^{4}}\left\vert
\nabla \z\right\vert _{L^{4}}\left\vert A\w\right\vert \leq
C\left\vert \z\right\vert ^{1/2}\left\vert \nabla \z%
\right\vert ^{1/2}\left\vert \nabla \z\right\vert ^{1/2}\left\vert A%
\z\right\vert ^{1/2}\left\vert A\w\right\vert \bigskip \\
&\leq &C\left( C_{\varepsilon }\left\vert \z\right\vert \left\vert
\nabla \z\right\vert ^{2}\left\vert A\z\right\vert
+\varepsilon \left\vert A\w\right\vert ^{2}\right) \bigskip \\
&\leq &CC_{\varepsilon ,\delta }\left\vert \z\right\vert
^{2}\left\vert \nabla \z\right\vert ^{4}+\frac{\chi }{48}%
\left\vert A\z\right\vert ^{2}+\frac{\nu }{44}\left\vert A\w%
\right\vert ^{2}, ~~~~ \left( C\varepsilon <\frac{\nu }{44}\right),
\end{eqnarray*}%
\begin{eqnarray*}
\left\vert \left( \z\cdot \nabla \h_{2},A\w\right)
\right\vert &\leq &C\left\vert \z\right\vert _{H^{2}}\left\vert
\nabla \h_{2}\right\vert \left\vert A\w\right\vert \leq
C\left\vert \nabla \h_{2}\right\vert \left\vert A\z%
\right\vert \left\vert A\w\right\vert \bigskip \\
&\leq &\frac{C\left( \gamma _{1},\gamma _{2}\right) }{2}\left\vert A\z%
\right\vert ^{2}+\frac{C\left( \gamma _{1},\gamma _{2}\right) }{2}%
\left\vert A\w\right\vert ^{2},
\end{eqnarray*}%
\begin{eqnarray*}
\left\vert (\h_{2}\cdot \nabla \z,A\w)
\right\vert &\leq &\left\vert \h_{2}\right\vert _{L^{4}}\left\vert
\nabla \z\right\vert _{L^{4}}\left\vert A\w\right\vert \leq
C\left\vert \h_{2}\right\vert ^{1/2}\left\vert \nabla \h%
_{2}\right\vert ^{1/2}\left\vert \nabla \z\right\vert
^{1/2}\left\vert A\z\right\vert ^{1/2}\left\vert A\w%
\right\vert \bigskip \\
&\leq &CC_{\varepsilon }\left\vert \h_{2}\right\vert \left\vert
\nabla \h_{2}\right\vert \left\vert \nabla \z\right\vert
\left\vert A\z\right\vert +C\varepsilon \left\vert A\w%
\right\vert ^{2}\bigskip \\
&\leq &CC_{\varepsilon ,\delta }\left\vert \h_{2}\right\vert
^{2}\left\vert \nabla \h_{2}\right\vert ^{2}\left\vert \nabla 
\z\right\vert ^{2}+C\delta \left\vert A\z\right\vert
+C\varepsilon \left\vert A\w\right\vert ^{2}\bigskip \\
&\leq &CC\left( \gamma _{1},\gamma _{2}\right) C_{\varepsilon ,\delta
}\left\vert \nabla \z\right\vert ^{2}+\frac{\chi}{48}%
\left\vert A\z\right\vert +\frac{\nu }{44}\left\vert A\w%
\right\vert ^{2},
\end{eqnarray*}%
\[
\left\vert (\w\cdot \nabla B_{1},A\w)
\right\vert \leq C\left\vert \w\right\vert _{H^{2}}\left\vert \nabla
B_{1}\right\vert \left\vert A\w\right\vert \leq C\left( \gamma
_{1},\gamma _{2}\right) \left\vert A\w\right\vert ^{2}, \quad \left(
C\left\vert \nabla B_{1}\right\vert \leq C\left( \gamma _{1},\gamma
_{2}\right) \right),  
\]%
\begin{eqnarray*}
\left\vert (B_{1}\cdot \nabla \w,A\w)
\right\vert &\leq &\left\vert B_{1}\right\vert _{L^{4}}\left\vert \nabla 
\w\right\vert _{L^{4}}\left\vert A\w\right\vert \leq
C\left\vert B_{1}\right\vert ^{1/2}\left\vert \nabla B_{1}\right\vert
^{1/2}\left\vert \nabla \w\right\vert ^{1/2}\left\vert A\w%
\right\vert ^{3/2}\bigskip \\
&\leq &CC_{\varepsilon }\left\vert B_{1}\right\vert ^{2}\left\vert \nabla
B_{1}\right\vert ^{2}\left\vert \nabla \w\right\vert ^{2}+\frac{\nu 
}{44}\left\vert A\w\right\vert ^{2}, 
\end{eqnarray*}%
\begin{eqnarray*}
\left\vert (B_{2}\cdot \nabla \z,A\w)
\right\vert &\leq &\left\vert B_{2}\right\vert _{L^{4}}\left\vert \nabla 
\z\right\vert _{L^{4}}\left\vert A\w\right\vert \leq
C\left\vert B_{2}\right\vert ^{1/2}\left\vert \nabla B_{2}\right\vert
^{1/2}\left\vert \nabla \z\right\vert ^{1/2}\left\vert A\z%
\right\vert ^{1/2}\left\vert A\w\right\vert \bigskip \\
&\leq &CC_{\varepsilon }\left\vert B_{2}\right\vert \left\vert \nabla
B_{2}\right\vert \left\vert \nabla \z\right\vert \left\vert A\z\right\vert +C\varepsilon \left\vert A\w\right\vert ^{2}\bigskip \\
&\leq &CC_{\varepsilon ,\delta }\left\vert B_{2}\right\vert ^{2}\left\vert
\nabla B_{2}\right\vert ^{2}\left\vert \nabla \z\right\vert
^{2}+C\delta \left\vert A\z\right\vert +C\varepsilon \left\vert A%
\w\right\vert ^{2}\bigskip \\
&\leq &CC_{\varepsilon ,\delta }\left\vert B_{2}\right\vert ^{2}\left\vert
\nabla B_{2}\right\vert ^{2}\left\vert \nabla \z\right\vert ^{2}+%
\frac{\chi}{48}\left\vert A\z\right\vert +\frac{\nu }{44}%
\left\vert A\w\right\vert ^{2}, 
\end{eqnarray*}%
\begin{eqnarray*}
\left\vert (\z\cdot \nabla B_{2},A\w)
\right\vert &\leq &C\left\vert \z\right\vert _{H^{2}}\left\vert
\nabla B_{2}\right\vert \left\vert A\w\right\vert \leq C\left\vert
\nabla B_{2}\right\vert \left\vert A\z\right\vert \left\vert A%
\w\right\vert \bigskip \\
&\leq &C\left\vert \nabla B_{2}\right\vert \left( C_{\varepsilon }\left\vert
A\z\right\vert ^{2}+\varepsilon \left\vert A\w\right\vert
^{2}\right) \bigskip \\
&\leq &\frac{\chi}{48}\left\vert A\z\right\vert ^{2}+\frac{\nu 
}{44}\left\vert A\w\right\vert ^{2}.
\end{eqnarray*}%
Now, we must limit each term on the right side of equalities (\ref{S8}),%
\begin{eqnarray*}
\left\vert -(\w\cdot \nabla \z,A\z)
\right\vert &\leq &\left\vert \w\right\vert _{L^{4}}\left\vert
\nabla \z\right\vert _{L^{4}}\left\vert A\z\right\vert \leq
C\left\vert \w\right\vert ^{1/2}\left\vert \nabla \w%
\right\vert ^{1/2}\left\vert \nabla \z\right\vert ^{1/2}\left\vert A%
\z\right\vert ^{3/2}\bigskip \\
&\leq &C_{\delta }\left\vert \w\right\vert ^{2}\left\vert \nabla 
\w\right\vert ^{2}\left\vert \nabla \z\right\vert
^{2}+\delta \left\vert A\z\right\vert ^{2}\bigskip \\
&\leq &C_{\delta \tau }\left\vert \w\right\vert ^{4}\left\vert
\nabla \w\right\vert ^{4}+\tau \left\vert \nabla \z%
\right\vert ^{4}+\frac{\chi}{48}\left\vert A\z\right\vert ^{2},
\end{eqnarray*}%
\begin{eqnarray*}
\left\vert -(\w\cdot \nabla \h_{2},A\z)
\right\vert &\leq &C\left\vert \w\right\vert _{H^{2}}\left\vert
\nabla \h_{2}\right\vert \left\vert A\z\right\vert \leq
C\left\vert \nabla \h_{2}\right\vert \left\vert A\w%
\right\vert \left\vert A\z\right\vert \bigskip \\
&\leq &\frac{C\left( \gamma _{1},\gamma _{2}\right) }{2}\left\vert A\w%
\right\vert ^{2}+\frac{C\left( \gamma _{1},\gamma _{2}\right) }{2}%
\left\vert A\z\right\vert ^{2},
\end{eqnarray*}%
\[
\left\vert (\u_{2}\cdot \nabla \z,A\z)
\right\vert \leq CC_{\delta }\left\vert \u_{2}\right\vert
^{2}\left\vert \nabla \u_{2}\right\vert ^{2}\left\vert \nabla 
\z\right\vert ^{2}+\frac{\chi}{48}\left\vert A\z%
\right\vert ^{2}, 
\]%
\begin{eqnarray*}
\left\vert (\z\cdot \nabla \w,A\z)
\right\vert &\leq &\left\vert \z\right\vert _{L^{4}}\left\vert
\nabla \w\right\vert _{L^{4}}\left\vert A\z\right\vert \leq
C\left\vert \z\right\vert ^{1/2}\left\vert \nabla \z%
\right\vert ^{1/2}\left\vert \nabla \w\right\vert ^{1/2}\left\vert A%
\w\right\vert ^{1/2}\left\vert A\z\right\vert \bigskip \\
&\leq &C_{\delta ,\varepsilon ,\lambda }\left\vert \z\right\vert
^{4}\left\vert \nabla \z\right\vert ^{4}+\lambda \left\vert \nabla 
\w\right\vert ^{4}+\frac{\nu }{44}\left\vert A\w\right\vert
^{2}+\frac{\chi}{48}\left\vert A\z\right\vert ^{2}, 
\end{eqnarray*}%
\begin{eqnarray*}
\left\vert (\z\cdot \nabla \u_{2},A\z)
\right\vert &\leq &C\left\vert \z\right\vert _{H^{2}}\left\vert
\nabla \u_{2}\right\vert \left\vert A\z\right\vert \leq
C\left\vert \nabla \u_{2}\right\vert \left[ \left\vert \nabla 
\z\right\vert +\left\vert A\z\right\vert \right] \left\vert A%
\z\right\vert \bigskip \\
&\leq &C\left\vert \nabla \u_{2}\right\vert \left\vert \nabla 
\z\right\vert \left\vert A\z\right\vert +C\left\vert \nabla 
\u_{2}\right\vert \left\vert A\z\right\vert ^{2}\bigskip \\
&\leq &CC_{\delta }\left\vert \nabla \u_{2}\right\vert
^{2}\left\vert \nabla \z\right\vert ^{2}+\frac{\chi}{48}%
\left\vert A\z\right\vert ^{2}+C\left( \gamma _{1},\gamma
_{2}\right) \left\vert A\z\right\vert ^{2}, 
\end{eqnarray*}

\begin{eqnarray*}
\left\vert (\h_{2}\cdot \nabla \w,A\z)
\right\vert  &\leq &\left\vert \h_{2}\right\vert _{L^{4}}\left\vert
\nabla \w\right\vert _{L^{4}}\left\vert A\z\right\vert \leq
C\left\vert \h_{2}\right\vert ^{1/2}\left\vert \nabla \h%
_{2}\right\vert ^{1/2}\left\vert \nabla \w\right\vert
^{1/2}\left\vert A\w\right\vert ^{1/2}\left\vert A\z%
\right\vert \bigskip  \\
&\leq &C_{\delta }\left\vert \h_{2}\right\vert \left\vert \nabla 
\h_{2}\right\vert \left\vert \nabla \w\right\vert \left\vert
A\w\right\vert +\delta \left\vert A\z\right\vert
^{2}\bigskip  \\
&\leq &C_{\delta ,\varepsilon }\left\vert \h_{2}\right\vert
^{2}\left\vert \nabla \h_{2}\right\vert ^{2}\left\vert \nabla 
\w\right\vert ^{2}+\frac{\nu }{44}\left\vert A\w\right\vert
^{2}+\frac{\chi}{48}\left\vert A\z\right\vert ^{2},
\end{eqnarray*}%
\begin{eqnarray*}
\left\vert \left( B_{1}\cdot \nabla \z,A\z\right)
\right\vert  &\leq &CC_{\varepsilon }\left\vert B_{1}\right\vert
^{2}\left\vert \nabla B_{1}\right\vert ^{2}\left\vert \nabla \z%
\right\vert ^{2}+\frac{\chi}{44}\left\vert A\z\right\vert
^{2}\bigskip  \\
&\leq &CC_{\varepsilon }\left\vert \nabla \z\right\vert ^{2}+\frac{%
\varkappa }{48}\left\vert A\z\right\vert ^{2}, \quad \left(
CC_{\varepsilon }\left\vert B_{1}\right\vert ^{2}\left\vert \nabla
B_{1}\right\vert ^{2}\leq CC_{\varepsilon }\right),
\end{eqnarray*}%
\begin{eqnarray*}
\left\vert (\z\cdot \nabla B_{1},A\z)
\right\vert  &\leq &C\left\vert \z\right\vert _{H^{2}}\left\vert
\nabla B_{1}\right\vert \left\vert A\z\right\vert \leq C\left\vert
\nabla B_{1}\right\vert \left[ \left\vert \nabla \z\right\vert
+\left\vert A\z\right\vert \right] \left\vert A\z\right\vert
\bigskip  \\
&\leq &C\left\vert \nabla B_{1}\right\vert \left\vert \nabla \z%
\right\vert \left\vert A\z\right\vert +C\left\vert \nabla
B_{1}\right\vert \left\vert A\z\right\vert ^{2}\bigskip  \\
&\leq &CC_{\delta }\left\vert \nabla B_{1}\right\vert ^{2}\left\vert \nabla 
\z\right\vert ^{2}+C\delta \left\vert A\z\right\vert
^{2}+C\left\vert \nabla B_{1}\right\vert \left\vert A\z\right\vert
^{2}\bigskip  \\
&\leq &CC_{\delta }\left\vert \nabla \z\right\vert ^{2}+\frac{%
\chi}{48}\left\vert A\z\right\vert ^{2}+C\left( \gamma
_{1},\gamma _{2}\right) \left\vert A\z\right\vert ^{2},\quad \left(
C\left\vert \nabla B_{1}\right\vert \leq C\left( \gamma _{1},\gamma
_{2}\right) \right), 
\end{eqnarray*}%
\begin{eqnarray*}
\left\vert (\w\cdot \nabla B_{2},A\z)
\right\vert  &\leq &C\left\vert \w\right\vert _{H^{2}}\left\vert
\nabla B_{2}\right\vert \left\vert A\z\right\vert \leq C\left\vert
\nabla B_{2}\right\vert \left\vert A\w\right\vert \left\vert A%
\z\right\vert \bigskip  \\
&\leq &C\left\vert \nabla B_{2}\right\vert \varepsilon \left\vert A\w%
\right\vert ^{2}+C\left\vert \nabla B_{2}\right\vert C_{\delta }\left\vert A%
\z\right\vert ^{2}\bigskip  \\
&\leq &\frac{\nu }{44}\left\vert A\w\right\vert ^{2}+C\left( \gamma
_{1},\gamma _{2}\right) \left\vert A\z\right\vert ^{2}, \quad \left(
C\left\vert \nabla B_{2}\right\vert C_{\delta }\leq C\left( \gamma
_{1},\gamma _{2}\right) \right),  
\end{eqnarray*}%
\begin{eqnarray*}
\left\vert \left( B_{2}\cdot \nabla \w,A\z\right)
\right\vert  &\leq &\left\vert B_{2}\right\vert _{L^{4}}\left\vert \nabla 
\w\right\vert _{L^{4}}\left\vert A\z\right\vert \leq
C\left\vert B_{2}\right\vert ^{1/2}\left\vert \nabla B_{2}\right\vert
^{1/2}\left\vert \nabla \w\right\vert ^{1/2}\left\vert A\w%
\right\vert ^{1/2}\left\vert A\z\right\vert \bigskip  \\
&\leq &C_{\delta }\left\vert B_{2}\right\vert \left\vert \nabla
B_{2}\right\vert \left\vert \nabla \w\right\vert \left\vert A\w\right\vert +\delta \left\vert A\z\right\vert ^{2}\bigskip  \\
&\leq &CC_{\delta ,\varepsilon }\left\vert \nabla \w\right\vert ^{2}+%
\frac{\nu }{44}\left\vert A\w\right\vert ^{2}+\frac{\chi}{48}%
\left\vert A\z\right\vert ^{2}.
\end{eqnarray*}%
Adding  equalities (\ref{S7}) and (\ref{S8}), from the previous estimates
we obtain%
\[
\begin{array}{l}
\displaystyle{\frac{d}{dt}}\left( \alpha \left\vert \nabla \w\right\vert
^{2}+\left\vert \nabla \z\right\vert ^{2}\right) +\left( \frac{3}{2}%
\nu -6C\left( \gamma _{1},\gamma _{2}\right) \right) \left\vert A\w%
\right\vert ^{2}+\left( \frac{3}{2}\chi -8C\left( \gamma _{1},\gamma
_{2}\right) \right) \left\vert A\z\right\vert ^{2}\bigskip  \\ 
\leq 2\alpha CC_{\varepsilon \delta }\left\vert \w\right\vert
^{2}\left\vert \nabla \w\right\vert ^{4}+2\alpha CC_{\varepsilon
}\left\vert \u_{2}\right\vert ^{2}C\left( \gamma _{1},\gamma
_{2}\right) \left\vert \nabla \w\right\vert ^{2}+2CC_{\varepsilon
,\delta }\left\vert \z\right\vert ^{2}\left\vert \nabla \z%
\right\vert ^{4}\bigskip  \\ 
+2CC\left( \gamma _{1},\gamma _{2}\right) C_{\varepsilon ,\delta }\left\vert
\nabla \z\right\vert ^{2}+2C_{\varepsilon }\left\vert
B_{1}\right\vert ^{2}\left\vert \nabla B_{1}\right\vert ^{2}\left\vert
\nabla \w\right\vert ^{2}+2CC_{\varepsilon ,\delta }\left\vert
B_{2}\right\vert ^{2}\left\vert \nabla B_{2}\right\vert ^{2}\left\vert
\nabla \z\right\vert ^{2}\bigskip  \\ 
+2C_{\delta \tau }\left\vert \w\right\vert ^{4}\left\vert \nabla 
\w\right\vert ^{4}+2C_{\varepsilon }\left\vert \u%
_{2}\right\vert ^{2}C\left( \gamma _{1},\gamma _{2}\right) \left\vert \nabla 
\z\right\vert ^{2}+2C_{\delta ,\varepsilon ,\lambda }\left\vert 
\z\right\vert ^{4}\left\vert \nabla \z\right\vert
^{4}\bigskip  \\ 
+2\lambda \left\vert \nabla \w\right\vert ^{4}+2CC_{\delta }C\left(
\gamma _{1},\gamma _{2}\right) \left\vert \nabla \z\right\vert
^{2}+2C_{\delta ,\varepsilon }\left\vert \h_{2}\right\vert
^{2}C\left( \gamma _{1},\gamma _{2}\right) \left\vert \nabla \w%
\right\vert ^{2}\bigskip  \\ 
+2CC_{\varepsilon }\left\vert B_{1}\right\vert ^{2}\left\vert \nabla
B_{1}\right\vert ^{2}\left\vert \nabla \z\right\vert
^{2}+2CC_{\delta }\left\vert \nabla B_{1}\right\vert ^{2}\left\vert \nabla 
\mathbf{z}\right\vert ^{2}+2C_{\delta ,\varepsilon }\left\vert
B_{2}\right\vert ^{2}\left\vert \nabla B_{2}\right\vert ^{2}\left\vert
\nabla \w\right\vert ^{2}.%
\end{array}%
\]%
Let 
\[
\Pi =2\max \left\{ 
\begin{array}{l}
\alpha CC_{\varepsilon \delta },\alpha CC_{\varepsilon }\left\vert \u%
_{2}\right\vert ^{2},CC_{\varepsilon ,\delta },CC_{\varepsilon ,\delta
},C_{\varepsilon }\left\vert B_{1}\right\vert ^{2}\left\vert \nabla
B_{1}\right\vert ^{2},\bigskip  \\ 
CC_{\varepsilon ,\delta }\left\vert B_{2}\right\vert ^{2}\left\vert \nabla
B_{2}\right\vert ^{2},C_{\delta \tau },C_{\varepsilon }\left\vert \u%
_{2}\right\vert ^{2},C_{\delta ,\varepsilon ,\lambda },\lambda ,CC_{\delta
},\bigskip  \\ 
C_{\delta ,\varepsilon }\left\vert \h_{2}\right\vert
^{2},CC_{\varepsilon }\left\vert B_{1}\right\vert ^{2}\left\vert \nabla
B_{1}\right\vert ^{2},CC_{\delta }\left\vert \nabla B_{1}\right\vert
^{2},C_{\delta ,\varepsilon }\left\vert B_{2}\right\vert ^{2}\left\vert
\nabla B_{2}\right\vert ^{2}%
\end{array}%
\right\} 
\]%
\[
\]%
\begin{equation}
\begin{array}{l}
\displaystyle{\frac{d}{dt}}\left( \alpha \left\vert \nabla \w\right\vert
^{2}+\left\vert \nabla \z\right\vert ^{2}\right) +\left( \frac{3}{2}%
\nu -6C\left( \gamma _{1},\gamma _{2}\right) \right) \left\vert A\w%
\right\vert ^{2}+\left( \frac{3}{2}\chi -8C\left( \gamma _{1},\gamma
_{2}\right) \right) \left\vert A\z\right\vert ^{2}\bigskip  \\ 
\leq \Pi \left\{ \left[ \left\vert \w\right\vert ^{2}\left\vert
\nabla \w\right\vert ^{2}+2C\left( \gamma _{1},\gamma _{2}\right)
+2+\left\vert \w\right\vert ^{4}\left\vert \nabla \w%
\right\vert ^{2}+\left\vert \nabla \w\right\vert ^{2}\right]
\left\vert \nabla \mathbf{w}\right\vert ^{2}\right. \bigskip  \\ 
\left. +\left[ \left\vert \z\right\vert ^{2}\left\vert \nabla 
\z\right\vert ^{2}+3C\left( \gamma _{1},\gamma _{2}\right)
+3+\left\vert \z\right\vert ^{4}\left\vert \nabla \z%
\right\vert ^{2}\right] \left\vert \nabla \z\right\vert ^{2}\right\}
.%
\end{array}
\label{S9}
\end{equation}%
Now, we can choose $\gamma _{1}$ and $\gamma _{2}$ small, so that the
following inequalities hold, 
\[
C\left( \gamma _{1},\gamma _{2}\right) <\frac{\nu }{12}~~~\rm{ and }~~~C\left(
\gamma _{1},\gamma _{2}\right) <\frac{\chi }{16},
\]%
then, from inequality (\ref{S9}) we get,%
\[
\begin{array}{l}
\displaystyle{\frac{d}{dt}}\left( \alpha \left\vert \nabla \w\right\vert
^{2}+\left\vert \nabla \z\right\vert ^{2}\right) +\nu \left\vert A%
\w\right\vert ^{2}+\chi \left\vert A\z\right\vert
^{2}\bigskip  \\ 
\leq \Pi \left\{ \left[ \displaystyle{\frac{1}{\alpha }}\left( 1+\left\vert \w%
\right\vert ^{2}+\left\vert \w\right\vert ^{4}\right) \left\vert
\nabla \w\right\vert ^{2}+\displaystyle{\frac{2}{\alpha }}C\left( \gamma
_{1},\gamma _{2}\right) +\frac{2}{\alpha }\right] \alpha \left\vert \nabla 
\w\right\vert ^{2}\right. \bigskip  \\ 
\left. +\left[ \left( \left\vert \z\right\vert ^{2}+\left\vert 
\z\right\vert ^{4}\right) \left\vert \nabla \z\right\vert
^{2}+3C\left( \gamma _{1},\gamma _{2}\right) +3\right] \left\vert \nabla 
\z\right\vert ^{2}\right\},%
\end{array}%
\]%
or%
\begin{equation}
\begin{array}{c}
\displaystyle{\frac{d}{dt}}\left( \alpha \left\vert \nabla \w\right\vert
^{2}+\left\vert \nabla \z\right\vert ^{2}\right) +\nu \left\vert A%
\w\right\vert ^{2}+\chi \left\vert A\z\right\vert
^{2}\bigskip  \\ 
\leq \Pi P(t) \left( \alpha \left\vert \nabla \w%
\right\vert ^{2}+\left\vert \nabla \z\right\vert ^{2}\right),
\end{array}
\label{SS9}
\end{equation}%
where%
\[
P(t) =\frac{1}{\alpha }\left( 1+\left\vert \w%
\right\vert ^{2}+\left\vert \w\right\vert ^{4}\right) \left\vert
\nabla \w\right\vert ^{2}+\left( \left\vert \z\right\vert
^{2}+\left\vert \z\right\vert ^{4}\right) \left\vert \nabla \z%
\right\vert ^{2}+\left( \frac{2}{\alpha }+3\right) \left( C\left( \gamma
_{1},\gamma _{2}\right) +1\right) .
\]%
Then, from (\ref{SS9}) and (\ref{e1}) we have%
\begin{equation}
\displaystyle{\frac{d}{dt}}\left( \alpha \left\vert \w\right\vert ^{2}+\left\vert 
\z\right\vert ^{2}\right) +L\left( \nu \left\vert \nabla \w%
\right\vert ^{2}+\chi \left\vert \nabla \z\right\vert ^{2}\right)
\leq 0,   \label{S10}
\end{equation}%
\begin{equation}
\begin{array}{c}
\displaystyle{\frac{d}{dt}}\left( \alpha \left\vert \nabla \w\right\vert
^{2}+\left\vert \nabla \z\right\vert ^{2}\right) +\nu \left\vert A%
\w\right\vert ^{2}+\chi \left\vert A\z\right\vert
^{2}\bigskip  \\ 
\leq \Pi P\left( t\right) \left( \alpha \left\vert \nabla \w%
\right\vert ^{2}+\left\vert \nabla \z\right\vert ^{2}\right).
\end{array}
\label{S11}
\end{equation}%
Note that from (\ref{S10}) we can infer that%
\begin{equation}
\alpha \left\vert \w(t) \right\vert ^{2}+\left\vert 
\z(t) \right\vert ^{2}\leq e^{-\beta Lt}\left( \alpha
\left\vert \w(0) \right\vert ^{2}+\left\vert \z%
(0) \right\vert ^{2}\right) ,\quad \forall t\geq 0,  \label{S12}
\end{equation}%
where $\beta =\min \left\{ \displaystyle{\frac{\nu }{\alpha }},\chi \right\} .$

Now, to derive bound for $\alpha \left\vert \nabla \w\right\vert
^{2}+\left\vert \nabla \z\right\vert ^{2},$ we take $g(
t) =\alpha \left\vert \nabla \w(t) \right\vert
^{2}+\left\vert \nabla \z(t) \right\vert ^{2}$ and
rewrite (\ref{S11}) as%
\begin{equation}
g^{\prime }\left( t\right) \leq \Pi P\left( t\right) g\left( t\right) .
\label{S13}
\end{equation}%

Now for any positive $t_{1}>0,$ by integrating (\ref{S10}) over the
interval $\left[ t_{1},t_{1}+1\right] ,$ we obtain that%
\begin{equation}
L \beta \int_{t_{1}}^{t_{1}+1} \left( \alpha \left\vert \nabla \w(s)
\right\vert ^{2}+\left\vert \nabla \z(s) \right\vert
^{2} \right) ds\leq \alpha \left\vert \w (t_{1}) \right\vert
^{2}+\left\vert \z(t_{1}) \right\vert ^{2}.  \label{S14}
\end{equation}

By mean value theorem, there exists a number $t_{0}\in \left[ t_{1},t_{1}+1%
\right] $ such that%
\begin{equation}
L \beta \left(  \alpha \left\vert \nabla \w(t_{0}) \right\vert
^{2}+\left\vert \nabla \z(t_{0}) \right\vert ^{2} \right) \leq
\alpha \left\vert \w(t_{1}) \right\vert ^{2}+\left\vert 
\z(t_{1}) \right\vert ^{2}\leq e^{-\beta Lt_1}\left(
\alpha \left\vert \w(0) \right\vert ^{2}+\left\vert 
\z(0) \right\vert ^{2}\right) .  \label{S15}
\end{equation}%
Next, for any $0<\delta \leq 1,$ the integration of (\ref{S13}) over the
interval $\left[ t_{0},t_{0}+\delta \right] $ we obtain%
\begin{equation}
g(t_{0}+\delta) \leq e^{\int_{t_{0}}^{t_{0}+\delta}\Pi
P\left( s\right) ds}g(t_{0}) \leq \left(L \beta \right)^{-1} e^{-\beta Lt_{1}}\left( \alpha
\left\vert \w(0) \right\vert ^{2}+\left\vert \z%
(0) \right\vert ^{2}\right)
e^{\int_{t_{0}}^{t_{0}+1}\Pi P(s) ds}.  \label{S16}
\end{equation}%
Note that%
\begin{equation}
\begin{array}{l}
\displaystyle\int_{t_{0}}^{t_{0}+1}\Pi P\left( s\right) ds=\Pi
\int_{t_{0}}^{t_{0}+1}P\left( s\right) ds\bigskip  \\ 
=\Pi \displaystyle \int_{t_{0}}^{t_{0}+1}\left[ \displaystyle{\frac{1}{\alpha }}\left(
1+\left\vert \w\right\vert ^{2}+\left\vert \w\right\vert
^{4}\right) \left\vert \nabla \w\right\vert ^{2}+\left( \left\vert 
\z\right\vert ^{2}+\left\vert \z\right\vert ^{4}\right)
\left\vert \nabla \z\right\vert ^{2} \right] ds \bigskip  \\ 
\displaystyle + \Pi \int_{t_0}^{t_0+1} \left[ \left( \displaystyle{\frac{2}{\alpha }}+3\right) \left( C\left( \gamma _{1},\gamma
_{2}\right) +1\right) \right] ds  ,%
\end{array}
\label{S17}
\end{equation}%
then by (\ref{S12}) and (\ref{S14}) each term of the above  integral is bound
and not depend on the choice of $t_{1},t_{0}$ and $\delta .$ Hence, we infer
from (\ref{S16}) that there exist a constant $c_{1}$ independent of $t_{1}$
and $t_{0}$ such that%
\[
g\left( t_{1}+1\right) =g\left( t_{0}+\left( t_{1}+1-t_{0}\right) \right)
\leq c_{1}e^{-\beta Lt_1},
\]%
which implies that%
\[
\alpha \left\vert \nabla \w(t) \right\vert
^{2}+\left\vert \nabla \z(t) \right\vert ^{2}\leq
c_{1}e^{-\beta L(t-1) },
\]%
for any $t>1.$ Thus, the proof of theorem is complete. 
\end{proof}

{\bf Remark:} In this proof in order to estimate some terms, for example the term $%
\left\vert -\alpha \left( \mathbf{w}\cdot \nabla \mathbf{w},A\mathbf{w}%
\right) \right\vert ,$ we use the following Sobolev and Ladyzhenskaya's
inequality to $\varphi \in H^{1},$%
\[
\left\vert \varphi \right\vert _{L^{4}}\leq C\left\vert \varphi \right\vert
_{L^{2}}^{1/2}\left\vert \nabla \varphi \right\vert _{L^{2}}^{1/2},
\]%
where $C$ is a constant depending on the size of the domain,\ which is valid
for the two-dimensional case. The three-dimensional case is similar, but we
would have to use the inequality 
\[
\left\vert \varphi \right\vert _{L^{4}}\leq C\left\vert \varphi \right\vert
_{L^{2}}^{1/4}\left\vert \nabla \varphi \right\vert _{L^{2}}^{3/4},
\]%
however, this three-dimensional case will not be done in this work.

Now, we prove Theorem 9 on stability.

\begin{proof}
Let $(\u_{0}, \h_{0}) \in V\times V$ and $\F, \G\in H^{1}(\tau ;H)$ $(\tau >0).$ We assume that $( 
\u_{0}, \h_{0}) $ and $\F, \G$
satisfy the following conditions%
\[
\begin{array}{l}
\sup_{0\leq t\leq \tau }|\F|_{L^{\frac{N}{2}}(\Omega )}+ \sup_{0\leq t\leq \tau }|\G|_{L^{\frac{N}{2}}(\Omega )}\leq
M,\bigskip  \\ 
\sup_{0\leq t\leq \tau }|\nabla \u_0(t)|^{2}\leq
C(M_{0},M),\bigskip \\ 
\sup_{0\leq t\leq \tau }|\nabla \h_0(t)|^{2}\leq C(M_{0},M).
\end{array}%
\]%
Now, we denote by $(\u_{1}(
x,y,z,t), \h_{1}(x,y,z,t)) $ the solution
to the system (\ref{Ch1}) with the initial condition $(\u_{0}%
, \h_{0}), $ which is possible by theorem 7. 
 
Now, we should show that the sequences $\lbrace \boldsymbol{u}_{1}^{n} \rbrace$ and $\lbrace \boldsymbol{h}_{1}^{n} \rbrace$ given by 
\[
\u_{1}^{n}(x,y,z) \equiv \u_{1}(
x,y,z,n\tau) ;~~~~~\h_{1}^{n}(x,y,z) =%
\h_{1}(x,y,z,n\tau) .
\]%
are Cauchy sequences in $L^2(\Omega)$. In fact, because of the periodicity of the solutions for positive integers $m>k$, we can write a strong solution 
of the system \eqref{Ch1} 
\begin{eqnarray*}
\u_{2}(x,y,z,t)  &\equiv &\u_{1}(
x,y,z,t+( m-k) \tau), \bigskip  \\
\h_{2}(x,y,z,t)  &\equiv &\h_{1}(x,y,z,t+(
m-k) \tau) ,
\end{eqnarray*}%
with the initial condition $(\u_{2}(
x,y,z,0),\h_{2}(x,y,z,0)).$ 
\\
Moreover, we can see that%
\[
\begin{array}{lll}
\mathbf{\theta }\left( x,y,z,t\right)  & = & \u_{1}(
x,y,z,t) -\u_{2}(x,y,z,t), \bigskip  \\ 
\mathbf{\xi }\left( x,y,z,t\right)  & = & \h_{1}(
x,y,z,t) -\h_{2}(x,y,z,t) 
\end{array}%
\]%
satisfy the system (\ref{LS1}). Hence, taking $t=k\tau ,$ we obtain from (%
\ref{S12}) that%
\[
\alpha |\mathbf{\theta }(t)|^{2}+|\mathbf{\xi }(t)|^{2}\leq (\alpha |\mathbf{%
\theta }(0)|^{2}+|\mathbf{\xi }(0)|^{2})\exp (-\beta L k \tau)
\]%
or%
\[
\alpha |\u_{1}\left( k\tau \right) -\u_{1}\left( m\tau
\right) |^{2}+|\h_{1}\left( k\tau \right) -\h_{1}\left(
m\tau \right) |^{2}\leq (\alpha |\mathbf{\theta }(0)|^{2}+|\mathbf{\xi }%
(0)|^{2}) e^{(-\beta L k \tau)} ,
\]%
but under the hypotheses
\[
\alpha |\mathbf{\theta }(0)|^{2}+|\mathbf{\xi }(0)|^{2}\leq 2C\left(
M_{0},M\right),
\]%
thus, we deduce that the sequences $\left\{ \u_{1}^{n}\right\}
_{n\in \mathbb{N}}$ and $\left\{ \h_{1}^{n}\right\} _{n\in \mathbb{N}%
}$ are Cauchy sequences in $\L^{2}(\Omega).$

Now, let $\u_{1}(x,y,z) $ and $\h_{1}(
x,y,z) $ be the $L^{2}$ limit of $\left\{\u_{1}^{n}\right\}
_{n\in \mathbb{N}}$ and $\left\{ \mathbf{h}_{1}^{n}\right\} _{n\in \mathbb{N}%
}$ respectively. On the other hand, we know that
\[
\sup_{0\leq t\leq \tau }|\u_{1}^{n}|_{H^{1}}^{2}\leq C(M_{0},M)~~~\rm{
and }~~~\sup_{0\leq t\leq \tau }|\h_{1}^{n}|_{H^{1}}^{2}\leq C(M_{0},M).
\]%
Thus, we obtain subsequences $\left\{ n_{j}\right\} _{j\in \mathbb{N}}
$ and $\left\{ n_{l}\right\} _{l\in \mathbb{N}}$ of $\mathbb{N}$\ such that 
\[
\nabla \u_{1}^{n_{j}}\rightharpoonup \nabla \u_{1}~~~{ and \,}~~~\nabla \h_{1}^{n_{l}}\rightharpoonup \nabla 
\h_{1}~~~~\rm{ in }~~~~\L^{2}(\Omega)~~~\rm{ weakly.}~~~
\]%
Thus, $(\u_{1},\h_{1}) \in V\times V$ and
satisfy%
\[
|\u_{1}|_{H^{1}}^{2}\leq C(M_{0},M)~~~~\rm{ and }~~~|\h%
_{1}|_{H^{1}}^{2}\leq C(M_{0},M).
\]%
On the other hand, we denote by $(\u( x,y,z,t) ,\h( x,y,z,t)) $ the solution of system (\ref{Ch1}) with the
initial condition $(\u_{1},\h_{1})$ and we will show that this is time-periodic. In fact, let %
\[
\begin{array}{lll}
\mathbf{\theta }\left( x,y,z,t\right)  & = & \u(x,y,z,t)
-\u_{1}(x,y,z,t+n\tau) \bigskip  \\ 
\mathbf{\xi }(x,y,z,t)  & = & \h( x,y,z,t) -%
\h_{1}( x,y,z,t+n\tau) 
\end{array}%
\]%
and we observe that $\left( \mathbf{\theta },\mathbf{\xi }\right) $ satisfies the
system (\ref{LS1}). Then, by (\ref{S12}) we obtain%
\[
\alpha |\u(\tau) -\u_{1}^{n+1}|^{2}+|\h
(\tau) -\h_{1}^{n+1}|^{2}\leq (\alpha |\u_{1}-%
\u_{1}^{n}|^{2}+|\h_{1}-\h_{1}^{n}|^{2}) e^{(
-\beta L \tau)} ,
\]%
finally, taking the limit $n\rightarrow \infty ,$ we get%
\[
\alpha |\u(\tau) -\u(0) |^{2}+|%
\h(\tau) -\h(0) |^{2}=0.
\]
\end{proof}

\section{Navier-Stokes equation}

Note that the Navier-Stokes equations%
\[
\begin{array}{lll}
\displaystyle \frac{\partial \u}{\partial t}-\frac{\eta }{{\rho }}\Delta 
\u+\u\cdot \nabla \u & = & \f-\displaystyle\frac{1}{\rho }\nabla p^{\ast
},\bigskip \\ 
\u= \mbox{\boldmath $\beta$}(x,t) \mbox{ on }\partial \Omega,  &  & 
\bigskip \\ 
{\rm div}\,\u=0 &  & 
\end{array}%
\]%
are a particular case of the MHD equations when the magnetic field $\h$ is identically zero, in this case when $\h=0,$ we prove existence
and uniqueness of periodic strong solutions to the NS equations with
inhomogeneous boundary conditions. In Ref. \cite{Morimoto} Morimoto show existence and
uniqueness of weak solutions with inhomogeneous boundary conditions to the
NS equations. On the other hand, when the magnetic field $\h$ is
identically zero, we can reproduce the results on the asymptotic stability,
obtained by Hsia et al for the Navier-Stokes equations in \cite{Hsia}.

\subsection*{Acknowledgments}

We are grateful to all the three referees of this manuscript for their careful revision and many generous suggestions which improved it significantly.

\end{document}